\newtheorem{theorem}{Theorem}
\newtheorem{proposition}[theorem]{Proposition}
\newtheorem{lemma}[theorem]{Lemma}
\newtheorem{corollary}[theorem]{Corollary}
\newtheorem{definition}[theorem]{Definition}
\title[Agro-ecological control: preventing spreading]{Agro-ecological control of a pest--host system: preventing spreading}
\author{Léo Girardin}
\address[L. G.]{CNRS, Institut Camille Jordan, Universit\'{e} Claude Bernard Lyon-1, 43 boulevard du 11 novembre 1918, 69622 Villeurbanne Cedex, France}
\email{leo.girardin@math.cnrs.fr}
\author{Baptiste Maucourt}
\address[B. M.]{Institut Camille Jordan, Universit\'{e} Claude Bernard Lyon-1, 43 boulevard du 11 novembre 1918, 69622 Villeurbanne Cedex, France}
\email{maucourt@math.univ-lyon1.fr}
\thanks{L. G. acknowledges support from the ANR via the project Indyana under grant agreement ANR-21-CE40-0008.}
\begin{document}

\keywords{reaction--diffusion, heterogeneous environments, ideal free distribution, optimal control, prey--predator system, vector-borne disease}
\subjclass[2010]{35K57, 49K20, 92D25.}

\maketitle

\begin{abstract}
We consider an agro-ecologically motivated coupling between a prey--predator system and a vector-borne epidemic system. The coupled system contains one ODE, two reaction--diffusion PDEs and one reaction--diffusion--advection PDE; it has no complete variational or monotonic structure and coefficients are spatially heterogeneous. We study the long-time behavior of solutions of the Cauchy--Neumann problem, which turns out to be largely decided by a linear stability criterion but still involves nonlinear subtleties. Then we consider an optimal control problem. Although it remains elusive analytically, we show numerically the variety of outcomes and the strong impact of the initial conditions.
\end{abstract}

\section{Introduction}

We are interested in the sugar beet agro-ecosystem. In the beet field (which is typically a rectangle), aphids (namely, Myzus Persicae and Aphis Fabae) are vectors of four yellows viruses: the beet mild yellowing virus (BMYV), the beet chlorosis virus (BChV), the beet yellows virus (BYV) and the beet mosaic virus (BtMV). The spread of these viruses in a field can be an economic disaster if left uncontrolled, as the sugar concentration in an infected beet decreases by about $50\%$.

To date, pesticides are the main tool to protect beets, and are used by a large majority of farmers. However, these pesticides have been proven to be a threat to biodiversity and human health. Finding effective alternatives to their use is a major challenge for a modern and sustainable world.

In our system, natural predators of the aphids (for instance, ladybugs Hippodamia variegata or Chnootriba similis) act as a biological method to control the aphid population. Usually, the lack of plants or flowers creates an unsuitable environment for such predators. We sacrifice a portion of the sugar beets in favor of flowers that will form what we call biodiversity refuges. In these flower patches, we expect an abundance of insects that ladybugs can feed on and use to reproduce faster. These refuges should help the predators to thrive and control the aphids, but might also help the aphids to grow faster, as the sugar beets are initially not developped enough for the aphids to feed on them. If this turns out to be biologically incorrect, the analysis we have done works just the same (the growth rate in the refuges would just be smaller than in the field). The refuges will in any case reduce the quantity of beets in the field. Therefore we expect conditions on the ecological properties of these refuges as well as the existence of an optimal refuge geometry. We understand this problem
as an optimal control problem.

Many other agro-ecosystems have a similar predator--pest--host structure and can benefit from our analysis.

The mathematical model we use is built upon the ODE model of \cite{mooreborerhosseini}:
\begin{equation*}
    \begin{cases}
     I' & =\beta_{VH}(H-I)V_i-\alpha_II\\
     V_i' & =\beta_{HV}IV_s-d_VV_i-s_V(V_s+V_i)V_i-hPV_i\\
     V_s' & =-\beta_{HV}IV_s-d_VV_s-s_V(V_s+V_i)V_s-hPV_s+b_V(V_s+V_i)\\
     P' & =\gamma h(V_s+V_i)P-d_PP.
    \end{cases}
\end{equation*}
Here, the notations $I$, $V_i$, $V_s$, $P$ stand respectively for the population density of infected hosts, infected vectors, susceptible vectors and predators. The various parameters (all positive constants) are biologically meaningful: $H$ is the total population of hosts, $\beta_{VH}$ and $\beta_{HV}$ are the transmission rates (from vectors to hosts and vice-versa), $\alpha_I$ is the recovery rate of the hosts, $b_V$ is the birth rate of the vectors, $d_V$ and $d_P$ are
the death rates of the vectors and the predators respectively, $s_V$ is the saturation rate of vectors (intraspecific competition for space or resources leading to logistic growth), $h$ is the predation rate and $\gamma$ is a coefficient measuring the efficiency of predation.

To better account for the sugar beet agro-ecosystem, we modify this model as follows: 
\begin{itemize}
    \item since infected beets never recover, we remove the recovery term $-\alpha_I I$;
    \item on the contrary, aphids can recover, whence we add a recovery term with rate $\alpha$ for vectors;
    \item since we are interested in generalist predators, able to survive in the absence of aphids because of other prey populations present in the environment, we add a birth term with rate $b_P$ and a saturation term with rate $s_P$ so that predator growth in the absence of aphids is logistic;
    \item the field is an open bounded connected set $\Omega$ with smooth boundary $\partial\Omega$ in a Euclidean space $\mathbb{R}^d$;
    \item the spatial heterogeneity in the field due to the biodiversity refuges changes the Malthusian parameters $r_V=b_V-d_V$, $r_P=b_P-d_P$ and the total host population $H$ into positive functions in $\mathcal{C}^{2+\alpha}(\overline{\Omega})$;
    \item vectors and predators are able to move and disperse in this heterogeneous field: vectors use Brownian motion and diffuse with rate $\sigma_V$ whereas predators use a non-standard ``ideal free dispersal strategy'' \cite{cantrellcosnerlou2010,cantrellcosnerlou} with rate $\sigma_P$ that will be commented in more details below.
\end{itemize}

Our interest is therefore in the following parabolic--ordinary differential system:
\begin{equation}\label{eqspring}
 \begin{cases}
 \partial_tI & =\beta_{VH}(H(x)-I)V_i \\
 \partial_tV_i & =\sigma_V\Delta V_i+\beta_{HV}IV_s-\alpha V_i-d_V(x)V_i-s_V(V_s+V_i)V_i-hPV_i \\
 \partial_tV_s & =\sigma_V\Delta V_s-\beta_{HV}IV_s+\alpha V_i-d_V(x)V_s-s_V(V_s+V_i)V_s-hPV_s+b_V(x)(V_s+V_i) \\
 \partial_t P & =\sigma_P\nabla\cdot\left(r_P(x)\nabla\left(\dfrac{P}{r_P(x)}\right)\right)+\gamma h(V_s+V_i)P+r_P(x)P-s_PP^2
 \end{cases}
\end{equation}
in $\Omega\times\mathbb{R}_+^*$, where we ommited the dependency on $(x,t)$ for $I$, $V_i$, $V_s$ and $P$. We supplement it with initial conditions:
\[
(I, V_i, V_s, P)(\cdot,0) = (0, V_{i,0}, V_{s,0}, P_0)\quad\text{in }\Omega,
\] 
where $V_{i,0}$, $V_{s,0}$, $P_0$ are nonnegative functions in $L^{\infty}(\overline{\Omega})$, and with ``no-flux'' boundary conditions:
\[
\dfrac{\partial V_i}{\partial n}=\dfrac{\partial V_s}{\partial n}=\dfrac{\partial (P/r_P)}{\partial n}=0\quad\text{on }\partial\Omega.
\]

Note that the condition $\partial_n(P/r_P)=0$ reduces to $\partial_n P=0$ under the biologically relevant assumption $\partial_n r_P=0$, that will be a standing assumption from now on. Hence, we actually have homogeneous Neumann boundary conditions for all three animal populations $V_i$, $V_s$ and $P$.

At least formally, the ideal free dispersal strategy $\nabla\cdot\left(r_P\nabla\left(P/r_P\right)\right)$ can be rewritten as 
$\nabla\cdot\left(\nabla P-P\nabla(\ln r_P)\right)$. 
Diffusion--advection strategies of the form $u\mapsto\nabla\cdot(\nabla u - u\mathbf{q})$ with $u$ a population density subjected to a logistic growth of the form $u\mapsto u(r-u)$ with $r$ the spatially heterogeneous distribution of resources were compared in \cite{cantrellcosnerlou2010}. It was proved that
choosing an advection strategy of the form $\mathbf{q}=\nabla(\ln r)$ leads to a so-called ``ideal free distribution'' where the spatial distribution of the population density at equilibrium is exactly proportional to the spatial distribution of resources. In contrast, it is well-known that if $\mathbf{q}=\mathbf{0}$, that is if the population density is subjected to pure Laplacian 
diffusion, the equilibrium will not be proportional to the distribution of resources. The choice $\mathbf{q}=\nabla(\ln r_P)$, referred to as an ``ideal free dispersal strategy'', is both evolutionary stable and convergent stable \cite{cantrellcosnerlou2010}. Thus natural selection should favor such strategies, as conjectured by theoretical ecologists. Many other forms of ideal free dispersal strategies have been studied mathematically in the last few years. We emphasize in particular the case of patchy 
environments with edge behavior \cite{macielcosnercantrelllutscher} since this is the kind of environments we actually have in mind and will simulate numerically. In the present model, the ideal free distribution of the predators at equilibrium in the absence of aphids will be a key component of our attempt at an analysis of the optimal control problem. Indeed, the biodiversity refuges will act on the harvest through the population density $V_i$, which will itself be impacted by the refuges through $r_V$ on one hand and through the predator equilibrium on the other hand. Precise knowledge of this equilibrium is therefore very helpful, and actually quite necessary. We emphazise the fact that $r_P/s_P$ is an equilibrium only in the absence of aphids: to avoid complicated cross-diffusion terms, the ideal free distribution of the predators does not take into account the vector population $V_i+V_s$, only the other resources.

Since the predators we consider may typically have a broad view of the field, it is natural to assume that they will tend to optimize their spatial distribution based on resources. In contrast, vectors are aphids, and their movement is mostly governed by the wind, which could be approximated by a Brownian motion if we consider that, on average, the wind blows uniformly in all directions. Taking a Brownian motion for the predators would mean that their equilibrium state is no longer proportional to their resources, and would invalidate the optimization of the principal eigenvalue of Section \ref{spatialhomogenization}, but not the main theorem of this paper. One could also take an ideal free dispersal strategy for the vectors and the result of Theorem \ref{largetimebehaviour} would still be valid.

In the sugar beet agro-ecosystem, these equations correspond to the spring, when the beets are susceptible to the disease. It is known that in this system, the population dynamics are much faster than the seasonal flow, which means that by the end of spring, the system will have reached a steady state. This is consistent with our numerical simulations and with the parameter values we found in the literature (cf. Supplementary Materials). This is why we simplify by considering an eternal spring.

With this set of notations and assumptions, our optimal control problem can be recasted as follows: assuming that $r_V=r_V^{\text{field}}+r_V^{\text{refuge}}R$, $r_P=r_P^{\text{field}}+r_P^{\text{refuge}}R$, $H=H^{\text{field}}(1-R)$ with $R:\Omega\to[0,1]$ a spatial distribution of biodiversity refuges and with positive constants $r_V^{\text{field}}$, $r_V^{\text{refuge}}$, $r_P^{\text{field}}$, $r_P^{\text{refuge}}$, $H^{\text{field}}$, characterize the set of optimal $R$ such that the harvest of healthy beets $\lim_{t\to+\infty}\int_{\Omega} H-I(\cdot,t)$ is maximized. 

\subsection{Organization of the paper}

In the next subsection, we present our main result. In Section 2, we verify the well-posedness of the Cauchy--Neumann problem.
In Section 3, we prove our main result. In Section 4, we establish estimates on the harvest.
Finally, in Section 5, we study numerically the optimal control problem.

\subsection{Main result}

Let $\mathcal{L}_{V_s}=-\sigma_V\Delta-r_V+h\dfrac{r_P}{s_P}$, $\mathcal{L}_{V_i}=-\sigma_V\Delta+\alpha+d_V+h\dfrac{r_P}{s_P}$ and let $\lambda_1(\mathcal{L}_{V_s})$, $\lambda_1(\mathcal{L}_{V_i})$ be their principal eigenvalues with Neumann boundary conditions on $\partial\Omega$. $\lambda_1(\mathcal{L}_{V_i})$ is positive thanks to its Rayleigh quotient and the positivity of $\alpha+d_V+hr_P/s_P$ (see proof of Corollary \ref{eigvectequaleigscalar}); however, $\lambda_1(\mathcal{L}_{V_s})$ can be of any sign. Denote $\varphi_{s,1}$ and $\varphi_{i,1}$, the principal eigenfunctions associated, respectively, to $\lambda_1(\mathcal{L}_{V_s})$ and $\lambda_1(\mathcal{L}_{V_i})$, with normalizations $\min_{\Omega}\varphi_{s,1}=1$ and $\max_{\Omega}\varphi_{i,1}=1$.

Our main result is the following.

\begin{theorem}\label{largetimebehaviour}
 $(i)$ Case of extinction: assume that $P_0\not\equiv0$ and $\lambda_1(\mathcal{L}_{V_s})>0$. Let $V=V_i+V_s$. Then, the following statements are true.
 \begin{itemize}
  \item $(V,P)(x,t)\to (0,r_P(x)/s_P)$ as $t\to+\infty$, uniformly in $\Omega$. 
  \item If we also assume $V_{i,0}\not\equiv0$, then $\displaystyle\liminf_{t\to+\infty}\inf_{x\in\Omega}I(x,t)>0$.
  \item More precisely, we have the following estimates. For any $\varepsilon>0$ such that $\lambda_1(\mathcal{L}_{V_s})-h\varepsilon>0$, if $|P_0-r_P/s_P|\leq\varepsilon$ and $V_0:=V_{i,0}+V_{s,0}\leq\varepsilon$, then for all $x\in\Omega,\ t\geq 0$,
  \[
  V(x,t)\leq\max_{\Omega}V_0\max_{\Omega}\varphi_{s,1}e^{-(\lambda_1(\mathcal{L}_{V_s})-h\varepsilon)t},
  \]
  \[
  \frac{I(x,t)}{H(x)} \leq 1-\exp\left(-\beta_{VH}\frac{\max_{\Omega}V_0\max_{\Omega}\varphi_{s,1}}{\lambda_1(\mathcal{L}_{V_s})-h\varepsilon}\right),
  \]
  \[
  1-\exp\left(-\beta_{VH}\min_{\Omega}V_{i,0}\min_{\Omega}\varphi_{i,1}\dfrac{1-e^{-(\lambda_1(\mathcal{L}_{V_i})+s_V\varepsilon+h\varepsilon)t}}{\lambda_1(\mathcal{L}_{V_i})+s_V\varepsilon+h\varepsilon}\right)\leq\frac{I(x,t)}{H(x)}.
  \]
 \end{itemize}
 
 $(ii)$ Case of persistence: assume that $V_{i,0}\not\equiv0$ and $\lambda_1(\mathcal{L}_{V_s})<0$. Then,
 \begin{itemize}
     \item $\displaystyle\liminf_{t\to+\infty}\inf_{x\in\Omega}V_i(x,t)>0$;
     \item $\displaystyle\lim_{t\to+\infty}I(x,t)=H(x)$ uniformly in $\Omega$.
 \end{itemize}
\end{theorem}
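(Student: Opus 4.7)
The common thread is to compare the $(V_i,V_s,P)$-dynamics with the linearization at the healthy equilibrium $(0,0,r_P/s_P)$, whose stability is exactly governed by the sign of $\lambda_1(\mathcal{L}_{V_s})$. As a preparation, I would analyze the $P$-equation through the substitution $Q=P/r_P$, which turns the ideal-free operator into the self-adjoint $(\sigma_P/r_P)\nabla\cdot(r_P\nabla Q)$ and produces
\[
\partial_t Q=\frac{\sigma_P}{r_P}\nabla\cdot(r_P\nabla Q)+\gamma h V Q+r_P Q(1-s_P Q),
\]
with homogeneous Neumann condition. Applying the parabolic maximum principle to $1/s_P-Q$ and exploiting the sign of the logistic kinetics yields a uniform-in-time lower bound $P(\cdot,t)\geq r_P/s_P-C\varepsilon$ as soon as $P_0\geq r_P/s_P-\varepsilon$, for a constant $C$ depending only on $\min r_P/\max r_P$; a symmetric bound from above holds once $\int V$ is known to be small.

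For case $(i)$, summing the $V_s$ and $V_i$ equations gives the self-contained logistic equation $\partial_t V=\sigma_V\Delta V+(r_V-hP)V-s_V V^2$. Using the preliminary $P$-bound, this implies $\partial_t V\leq -(\mathcal{L}_{V_s}-h\varepsilon)V$, so the separated-variable function $\max_\Omega V_0\cdot\varphi_{s,1}(x)\cdot e^{-(\lambda_1(\mathcal{L}_{V_s})-h\varepsilon)t}$ is a supersolution (thanks to the normalization $\min\varphi_{s,1}=1$ and the Neumann boundary condition satisfied by $\varphi_{s,1}$), and parabolic comparison delivers the claimed decay on $V$. The estimates on $I$ then follow from the explicit representation $I/H=1-\exp(-\beta_{VH}\int_0^t V_i)$: the upper bound plugs in $V_i\leq V$ and integrates, while the lower bound is obtained by dropping the nonnegative $\beta_{HV}I V_s$ term in the $V_i$ equation to get $\partial_t V_i\geq -(\mathcal{L}_{V_i}+(h+s_V)\varepsilon)V_i$ and using $\min_\Omega V_{i,0}\cdot\varphi_{i,1}(x)\cdot e^{-(\lambda_1(\mathcal{L}_{V_i})+(h+s_V)\varepsilon)t}$ as subsolution. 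Extending the uniform convergence $V\to 0$, $P\to r_P/s_P$ to arbitrary initial data relies on an $\omega$-limit argument: if $V$ did not converge to $0$, a limit point would produce a nontrivial stationary subsolution of the $V$ equation with $P=r_P/s_P$, forcing $\lambda_1(\mathcal{L}_{V_s})\leq 0$, a contradiction; once $V(\cdot,t_0)$ has become small, the quantitative estimates apply with time origin $t_0$. The positivity $\liminf I>0$ is then a corollary of the pointwise lower bound on $I/H$.

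For case $(ii)$, $\lambda_1(\mathcal{L}_{V_s})<0$ makes the trivial state linearly unstable. A first step is a uniform upper bound on $P$, obtained from a logistic bound on $V$ and from the $Q$-equation above with its positive forcing $\gamma h V Q$ dominated by its logistic sink. Then I would build a stationary subsolution $\delta\varphi$ of the $V$-equation with $\varphi$ the positive principal eigenfunction of a suitably perturbed operator close to $\mathcal{L}_{V_s}$ and $\delta>0$ small enough, so that parabolic comparison yields $\liminf V>0$ uniformly. To promote this positivity from $V$ to $V_i$ specifically, I would bootstrap through the system: once $V$ stays uniformly positive and $V_i\not\equiv 0$ initially, the equation $\partial_t I=\beta_{VH}(H-I)V_i$ forces $I$ to become and stay positive on a subdomain, hence the transfer $\beta_{HV}I V_s$ into $V_i$ together with the $\alpha V_i$ injection into $V_s$ forms a closed loop that keeps both species uniformly positive. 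The final claim $I\to H$ uniformly then follows directly from $I/H=1-\exp(-\beta_{VH}\int_0^t V_i)$ and $\liminf V_i>0$.

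The main obstacle is the persistence half (case $(ii)$): neither the coupled $(V_i,V_s)$ subsystem nor the $P$-equation is monotone, variational or amenable to a clean pointwise comparison principle (because of the ideal-free dispersal). The delicate bootstrap between a uniform upper bound on $P$, a uniform lower bound on $V$ via an eigenfunction subsolution, and the propagation of positivity through the exchange terms between $V_s$ and $V_i$ via $I$ is the technical crux of the proof.
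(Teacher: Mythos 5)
Your treatment of part $(i)$ for well-prepared data matches the paper's: a lower barrier for $P$, the separated supersolution $\max_\Omega V_0\,\varphi_{s,1}e^{-(\lambda_1(\mathcal{L}_{V_s})-h\varepsilon)t}$, the subsolution built from $\varphi_{i,1}$, and the explicit formula $I/H=1-\exp(-\beta_{VH}\int_0^tV_i)$. But two of the claims you make for \emph{general} initial data have gaps. First, your $\omega$-limit argument is circular: to say that a limit point of $V$ yields a subsolution of the $V$-equation ``with $P=r_P/s_P$'' you already need $P\to r_P/s_P$ (or at least $\liminf(P-r_P/s_P)\geq0$), which your maximum-principle bound only gives when $P_0\geq r_P/s_P-\varepsilon$. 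The paper avoids this by comparing $P$ from below with the solution $\underline{P}$ of the pure logistic problem (dropping the nonnegative coupling $\gamma hVP$), which converges to $r_P/s_P$ for any $P_0\not\equiv0$ \emph{independently of} $V$; only then does the decay of $V$ follow. Second, you deduce $\liminf_t\inf_x I>0$ from your pointwise lower bound on $I/H$, but that bound is proportional to $\min_\Omega V_{i,0}$, which vanishes whenever $V_{i,0}\not\equiv0$ merely has a zero. The paper needs Huska's uniform Harnack inequality here, to upgrade $V_i(\cdot,\delta)\not\equiv0$ to $\inf_\Omega V_i(\cdot,\delta)>0$ before invoking the integral formula.

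The more serious gap is in part $(ii)$. A time-independent subsolution $\delta\varphi$ built from (a perturbation of) the principal eigenfunction of $\mathcal{L}_{V_s}$ is a subsolution of the $V$-equation only where $h(P-r_P/s_P)\leq|\lambda_1(\mathcal{L}_{V_s})|-s_V\delta\max\varphi$, i.e.\ only when $P$ stays uniformly close to $r_P/s_P$. In the persistence regime this is false: the coupling $\gamma hVP$ pushes $P$ strictly above $r_P/s_P$ by an amount controlled by $V$, not by $\delta$, so for generic parameters the term $-hPV$ destroys the subsolution property and no choice of small $\delta$ repairs it. This is precisely why the paper's proof is structured around a quantitative excursion lemma: on any maximal interval where $V\leq\varepsilon$, the predator relaxes in bounded time to $P\leq r_P(K_\varepsilon+\varepsilon)\approx r_P/s_P$, after which $V$ regrows exponentially (negative eigenvalue) and must exceed $2\varepsilon$ within a computable time $S_\varepsilon$; meanwhile $V$ can only decay at a bounded rate, and Huska's Harnack inequality converts the recurrent lower bound on $\max_\Omega V$ into a lower bound on $\min_\Omega V$. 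Your sketch names this as the ``technical crux'' but the mechanism you propose does not resolve it. Your subsequent bootstrap from $\liminf\inf V>0$ to $\liminf\inf V_i>0$ via the forcing $\beta_{HV}IV_s$ is essentially the paper's argument and is fine once the persistence of $V$ is actually established.
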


This theorem, that does not require any particular relations between $r_V$, $r_P$, $H$ and the refuge distribution $R$, links the non-linear stability of the equilibrium $U_P=(0,0,0,r_P/s_P)^T$ with its linear stability. The proof combines linear stability arguments, supersolutions and subsolutions and a parabolic Harnack inequality in simultaneous time due to Huska \cite{huska}. In the persistence case, the control strategy fails and the whole field ends up being contaminated. On the contrary, in the extinction case, there might still be a salvageable harvest in the end. We point out that, although the persistence versus extinction criterion is classically the sign of some principal eigenvalue, the nonlinearities of the system still play a role in the extinction property.
In a purely linearly determined extinction scenario, the equilibrium $U_P$ would attract trajectories. This is for instance what happens classically in Fisher--KPP-type equations or systems. Here, in contrast, $(V_i,V_s,P)$ converges indeed to $(0,0,r_P/s_P)$ but $I$ ends up being at positive distance from $0$. We point out right now that the quantity of infected beets $I$ is nondecreasing in time. Moreover, when $V_i=V_s=0$ and $P=r_P/s_P$, any function $I_\infty$ of the spatial variable $x$ gives an equilibrium: the final quantity of infected beets cannot be figured out simply by calculating equilibria. 
Still, we provide estimates that bound the neighborhood of $U_P$ in which the evolution is trapped when initial conditions are well-prepared. The conditions $|P_0-r_P/s_P|\leq\varepsilon$ and $V_{i,0}+V_{s,0}\leq\varepsilon$ are biologically natural in a context of biological invasion by aphids in a territory where predators were previously close to equilibrium. Nevertheless, as will be shown by the proof, we can still provide estimates for ill-prepared initial conditions; these are just slightly more complicated.

As a by-product of these estimates, we can derive estimates for the harvest when initial conditions are well-prepared (i.e close to the equilibrium $(0,r_P(x)/s_P)$). We denote $I_\infty$ the pointwise limit of $I(t,x)$ as $t\to+\infty$ (which is well-defined since $I$ is monotonic with respect to time and globally bounded).

\begin{corollary}\label{estimatesharvest}
Assume that $\lambda_1(\mathcal{L}_{V_s})>0$. Then, for any $\varepsilon>0$ such that $\lambda_1(\mathcal{L}_{V_s})-h\varepsilon>0$, if $|P_0-r_P/s_P|\leq\varepsilon$ and $V_0\leq\varepsilon$, then
\[
    \mbox{e}^{-\beta_{VH}\dfrac{\max_{\Omega}V_0\max_{\Omega}\varphi_{s,1}}{\lambda_1(\mathcal{L}_{V_s})-h\varepsilon}}\leq\frac{\displaystyle\int_{\Omega}(H-I_\infty)}{\displaystyle\int_{\Omega}H}\leq\mbox{e}^{-\beta_{VH}\dfrac{\min_{\Omega}V_{i,0}\min_{\Omega}\varphi_{i,1}}{\lambda_1(\mathcal{L}_{V_i})+s_V\varepsilon+h\varepsilon}}.
\]

Assume that $\lambda_1(\mathcal{L}_{V_s})<0$. Then
\[
 \displaystyle\int_{\Omega}(H-I_\infty)=0.
\]
\end{corollary}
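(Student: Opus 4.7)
The plan is to integrate the pointwise bounds already furnished by Theorem \ref{largetimebehaviour} and then pass to the limit $t\to+\infty$, using monotonicity of $I$ in time to justify taking the pointwise limit $I_\infty$.

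First I would handle the persistence case $\lambda_1(\mathcal{L}_{V_s})<0$: Theorem \ref{largetimebehaviour}(ii) gives $I(x,t)\to H(x)$ uniformly on $\overline{\Omega}$, so $H-I_\infty \equiv 0$ and the claimed equality follows immediately. For the extinction case $\lambda_1(\mathcal{L}_{V_s})>0$, the two-sided estimates on $I/H$ from the third bullet of Theorem \ref{largetimebehaviour}(i) already encode everything we need. Both inequalities are pointwise in $x$, uniform in $t$, and $I(\cdot,t)$ is nondecreasing in $t$ (since $H-I\geq 0$ and the right-hand side of the first equation of \eqref{eqspring} is nonnegative). Therefore we may pass to the limit $t\to+\infty$ in these pointwise inequalities, obtaining
\[
\exp\!\left(-\beta_{VH}\frac{\max_{\Omega}V_0 \max_{\Omega}\varphi_{s,1}}{\lambda_1(\mathcal{L}_{V_s})-h\varepsilon}\right)
\leq \frac{H(x)-I_\infty(x)}{H(x)}
\leq \exp\!\left(-\beta_{VH}\frac{\min_{\Omega}V_{i,0}\min_{\Omega}\varphi_{i,1}}{\lambda_1(\mathcal{L}_{V_i})+s_V\varepsilon+h\varepsilon}\right),
\]
where the upper bound uses that $1-e^{-\lambda t}\to 1$ as $t\to+\infty$ (the denominator $\lambda_1(\mathcal{L}_{V_i})+s_V\varepsilon+h\varepsilon$ is positive, since $\lambda_1(\mathcal{L}_{V_i})>0$ by the discussion preceding the theorem).

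Finally I would multiply these pointwise bounds by $H(x)>0$, integrate over $\Omega$, and divide by $\int_\Omega H$; the exponential factors are independent of $x$, so they come out of the integral, yielding exactly the stated inequality on the harvest ratio.

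I expect no real obstacle here: the corollary is essentially a packaging of the theorem's estimates, and the only thing to be careful about is that $I$ is monotone nondecreasing in $t$ and bounded by $H$, so $I_\infty(x)=\lim_{t\to+\infty}I(x,t)$ genuinely exists pointwise and the inequalities survive the passage to the limit. An alternative but equivalent route, worth mentioning in the write-up, is to integrate the $I$-equation directly in time, obtaining $H(x)-I(x,t)=H(x)\exp\!\bigl(-\beta_{VH}\int_0^t V_i(x,s)\,ds\bigr)$, and then bound $\int_0^{\infty} V_i$ above by the $V$-decay estimate and below by inverting the third pointwise bound on $I/H$; both routes lead to the same conclusion.
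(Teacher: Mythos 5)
Your proposal is correct and follows essentially the same route as the paper: the paper likewise takes the pointwise two-sided bounds on $I/H$, lets $t\to+\infty$ (using monotonicity of $I$ and the explicit formula $H-I = H\exp(-\beta_{VH}\int_0^t V_i)$), integrates over $\Omega$, and specializes to $t_\varepsilon=0$ under the well-prepared hypotheses $|P_0-r_P/s_P|\leq\varepsilon$, $V_0\leq\varepsilon$; the persistence case is handled identically via $I_\infty=H$.
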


\section{Existence and unicity of the solution}

\begin{proposition}
 The system \eqref{eqspring} admits a unique classical solution defined in $\overline{\Omega}\times\mathbb{R}_+^*$.
\end{proposition}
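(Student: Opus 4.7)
The plan is to establish local existence and uniqueness by a Banach fixed-point argument, and then globalize using a priori bounds that follow from the dissipative structure of the nonlinearities.

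First, I unpack the ideal free dispersal operator. Under the standing assumption $\partial_n r_P=0$ and since $r_P\in\mathcal{C}^{2+\alpha}(\overline{\Omega})$ is positive, the expansion
\[
\sigma_P\nabla\cdot\!\left(r_P\nabla\!\left(\tfrac{P}{r_P}\right)\right)=\sigma_P\Delta P-\sigma_P\nabla(\ln r_P)\cdot\nabla P-\sigma_P\Delta(\ln r_P)\,P
\]
shows that the $P$-equation reduces to a uniformly parabolic equation with Hölder coefficients and homogeneous Neumann boundary condition, so that all classical parabolic estimates apply.

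For local existence, I would fix $T>0$ small and set up a map $\Phi$ on a closed ball of a suitable radius in $X_T:=\mathcal{C}([0,T]\times\overline{\Omega};\mathbb{R}_+)^4$: given $(\tilde I,\tilde V_i,\tilde V_s,\tilde P)$, define $(I,V_i,V_s,P)$ by solving the four equations of \eqref{eqspring} with every occurrence of an unknown in a nonlinear term replaced by the corresponding tilde variable, leaving linear equations in the four unknowns. The $I$-equation is an explicitly solvable linear ODE in $t$ parametrized by $x$. The three remaining equations are linear parabolic problems with bounded coefficients and Neumann condition, for which existence, uniqueness and maximum-principle estimates are standard. A Duhamel-type computation and the smoothing estimates for the linear parabolic semigroups yield that $\Phi$ is a contraction in $X_T$ provided $T$ is small enough, producing a unique local solution.

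Nonnegativity is obtained coordinate-by-coordinate: for $I$, from the explicit representation $I(x,t)=H(x)\bigl(1-\exp(-\beta_{VH}\int_0^tV_i(x,s)\,ds)\bigr)$; for $V_i$, $V_s$, $P$, by the weak maximum principle applied to the linearized problem in the fixed-point scheme, noticing that the boundary behavior of each nonlinear term is nonnegative whenever the other unknowns are nonnegative. Uniform-in-time $L^\infty$ bounds then follow from the structure of the model. Indeed, from the $I$-equation, $I\leq H$. Summing the equations for $V_i$ and $V_s$, the total vector density $V=V_i+V_s$ satisfies
\[
\partial_tV=\sigma_V\Delta V+r_V(x)V-s_VV^2-hPV\leq\sigma_V\Delta V+(\max_{\overline{\Omega}}r_V)V-s_VV^2,
\]
so the comparison principle yields $\|V(\cdot,t)\|_{\infty}\leq\max\bigl(\|V_{i,0}+V_{s,0}\|_{\infty},\max_{\overline{\Omega}}r_V/s_V\bigr)$. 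Plugging this into the $P$-equation, the zeroth-order coefficient $\gamma h V+r_P$ is bounded above uniformly in $t$, so the logistic saturation $-s_PP^2$ forces a uniform $L^\infty$ bound on $P$ as well. These bounds preclude finite-time blow-up, and iterating the local existence argument produces a global solution. Finally, interior and boundary parabolic Schauder estimates applied to the now bounded source terms, together with the $\mathcal{C}^{2+\alpha}$ regularity of the coefficients, upgrade $(V_i,V_s,P)$ to $\mathcal{C}^{2+\alpha,1+\alpha/2}$, and smoothness of $I$ follows from its explicit expression and the regularity of $V_i$.

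The main obstacle is the non-standard form of the $P$-equation, but as noted above this is defused by the expansion of the divergence-form operator under $\partial_n r_P=0$; once this is observed, the proof is a textbook combination of fixed-point local existence, maximum-principle a priori bounds and parabolic regularity.
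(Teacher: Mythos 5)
Your proposal is correct and follows essentially the same route as the paper: local existence of a classical solution (which the paper simply quotes from Pao's book rather than re-deriving by a fixed-point argument) followed by comparison-principle $L^\infty$ bounds on $I$, $V=V_i+V_s$ and $P$ to rule out blow-up. The only notable divergence is in bounding $P$: you expand the ideal free dispersal operator and invoke the logistic saturation against a constant supersolution, whereas the paper keeps the divergence form and uses the tailored stationary supersolution $\overline{P}=\mu r_P$, exploiting $\nabla\cdot\bigl(r_P\nabla(r_P/r_P)\bigr)=0$; both work.
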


\begin{proof}
  The local Lipschitz continuity of the functions involved in system \eqref{eqspring} and \cite[Page 447, Theorem 11.2]{pao} yield the existence of a unique classical solution in $\overline{\Omega}\times[0,T_0)$, for some $T_0\leq+\infty$. This solution either exists globally or blows up in finite time. It is left to show that the solution is bounded independently of $T_0$.
 
 First of all, because of the nonnegativity of the initial conditions $V_{i,0},V_{s,0}$ and $P_0$, $0$ is a trivial subsolution of each equation, hence the solutions $V_i$, $V_s$ and $P$ are nonnegative.
 
 $I$ can be computed explicitly, at least for $t\in[0,T_0)$:
 \[
  0\leq I(x,t)=H(x)\left(1-\exp\left(-\beta_{VH}\displaystyle\int_0^tV_i(x,s)\textnormal{d}s\right)\right).
 \]
 Since $\beta_{VH}>0$ and $V_i\geq 0$, it follows that $0\leq I(x, t)\leq H(x)$ for all $(x,t)\in\Omega\times[0,T_0)$: $I$ is bounded. Let us point out that this integral formula for $I(x,t)$ also confirms the time monotonicity and the fact that the final value of $I$ is not easily figured out, as it depends upon the whole trajectory of $V_i$, and in particular upon the initial condition $V_{i,0}$.
 
 Denote $V:=V_i+V_s$. The function $V$ solves
 \[
  \partial_tV-\sigma_V\Delta V = r_VV-s_VV^2-hPV.
 \]
 
 Let $\overline{V}$ be a solution of

 \begin{equation*}
     \begin{cases}
     \overline{V}(\cdot,0)=\underset{x\in\Omega}{\max}V_0:=\underset{x\in\Omega}{\max}(V_{i,0}+V_{s,0}) & \mbox{ in }\Omega\\
     \partial_t \overline{V}=\underset{x\in\Omega}{\max}(r_V)\overline{V}-s_V\overline{V}^2 & \mbox{ in }\Omega\times\mathbb{R}_+^*\\
     \dfrac{\partial\overline{V}}{\partial n}=0 & \mbox{ on }\partial\Omega\times\mathbb{R}_+^*.
     \end{cases}
 \end{equation*}
                                      
 By nonnegativity of $h,P,V$, $\overline{V}$ is a supersolution of the Cauchy--Neumann scalar problem of which $V$ is a solution. Moreover, $\overline{V}$ converges to $\max_{x\in\Omega}r_V(x)/s_V$ when $t\to+\infty$ (the solution can be computed explicitly), therefore is bounded. We then deduce the global boundedness of $V_i,V_s$: $0\leq V_i(x,t),V_s(x,t)\leq\overline{V}(x,t)$ for all $(x,t)\in\Omega\times[0,T_0)$, independently of $T_0$.
 
 Let $\mu\in\mathbb{R}$ be large enough so that $\overline{P}:=\mu r_P\geq P_0$ and $(r_P^2+\gamma h\overline{V}r_P)\mu-s_Pr_P^2\mu^2\leq0$. This is possible because $s_P,r_P$ are positive everywhere on the compact $\overline{\Omega}$. Then
 \[
 \begin{array}{ll}
  \partial_t \overline{P}-\sigma_P\nabla\cdot\left(r_P\nabla\left(\dfrac{\overline{P}}{r_P}\right)\right) = 0 & \geq (r_P^2+\gamma h\overline{V}r_P)\mu-s_Pr_P^2\mu^2\\
  & =\gamma hV\overline{P}+r_P\overline{P}-s_P\overline{P}^2,
 \end{array}
 \]
 hence $\overline{P}=\mu r_P$ is a supersolution of the Cauchy--Neumann scalar problem of which $P$ is a solution. Hence, $P$ is globally bounded, independently of $T_0$.

\end{proof}

\section{Stability of the predator-only equilibrium}

\subsection{Existence and unicity}

We are interested in studying the stability of the predator-only equilibrium $U_P:=(0,0,0,P^*)^T$, where $P^*$ is the nonnegative nonzero solution of
\begin{equation}\label{periodiceq}
 -\sigma_P\overline{\mathcal{L}}(P^*)=r_PP^*-s_P(P^*)^2\mbox{ in }\Omega.
\end{equation}
where we denote $\overline{\mathcal{L}}:=\overline{\mathcal{L}}(r_P):=\nabla\cdot\left(r_P\nabla\left(\dfrac{\cdot}{r_P}\right)\right)$.

\begin{proposition}\label{unicityofPstar}
 $P^*=\dfrac{r_P}{s_P}$ is the unique nonnegative nonzero solution of \eqref{periodiceq}.
\end{proposition}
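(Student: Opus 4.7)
The approach I would take is to perform the substitution $w := P^*/r_P$, which transforms the ideal free operator into a genuine divergence-form operator, $\overline{\mathcal{L}}(P^*) = \nabla \cdot (r_P \nabla w)$, and reduces the Neumann-type boundary condition to $\partial_n w = 0$. Equation \eqref{periodiceq} for $w$ then reads
\[
-\sigma_P \nabla \cdot (r_P \nabla w) = r_P^2 \, w \, (1 - s_P w) \quad \text{in } \Omega, \qquad \partial_n w = 0 \text{ on } \partial \Omega.
\]
The constant function $w \equiv 1/s_P$ is clearly a solution, which corresponds to $P^* = r_P/s_P$; the substance of the proposition is uniqueness.

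Given any nonnegative nonzero solution $P^*$, I would set $w := P^*/r_P \geq 0$ with $w \not\equiv 0$. The first step would be to show that $w > 0$ everywhere on $\overline{\Omega}$ by rewriting the PDE in the form $-\sigma_P \nabla \cdot (r_P \nabla w) - c(x) w = 0$ with $c(x) := r_P^2 (1 - s_P w) \in L^\infty(\Omega)$ and invoking the strong maximum principle for uniformly elliptic divergence-form operators with bounded zeroth-order coefficient: since $w \geq 0$ and $w \not\equiv 0$, this forces strict positivity of $w$ on $\overline{\Omega}$.

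Next I would compare $\max_{\overline{\Omega}} w$ and $\min_{\overline{\Omega}} w$ to the value $1/s_P$ via the interior extremum principle. Let $x_M$ be a maximum point of $w$. If $x_M \in \partial \Omega$ and $w$ is non-constant, Hopf's lemma would yield $\partial_n w(x_M) > 0$, contradicting the Neumann condition; hence either $w$ is constant or the maximum is interior. In the latter case $\nabla w(x_M) = 0$ and $\Delta w(x_M) \leq 0$, so $\nabla \cdot (r_P \nabla w)(x_M) = r_P(x_M) \Delta w(x_M) \leq 0$, and the PDE then forces $r_P(x_M)^2 w(x_M)(1 - s_P w(x_M)) \geq 0$, i.e.\ $w(x_M) \leq 1/s_P$. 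Applying the symmetric argument at a minimum point gives $w(x_m) \geq 1/s_P$, and combining these two inequalities forces $w \equiv 1/s_P$, hence $P^* = r_P/s_P$.

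The only genuinely delicate point is checking that the strong maximum principle and Hopf's lemma apply to the divergence-form operator $-\nabla \cdot (r_P \nabla \cdot)$ with its mixed-sign zeroth-order coefficient. Since $r_P \in \mathcal{C}^{2+\alpha}(\overline{\Omega})$ is positive on the compact $\overline{\Omega}$, the operator can equivalently be written in non-divergence form as $-r_P \Delta - \nabla r_P \cdot \nabla$ with smooth coefficients, so both classical results apply directly. I expect this bookkeeping, rather than any deeper argument, to be the only real obstacle of the proof.
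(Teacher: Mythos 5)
Your proposal is correct in substance but takes a genuinely different route from the paper. The paper's proof is short and outsources the uniqueness: it invokes \cite[Proposition 3.3]{cantrellcosner2004} (the nonnegative nonzero steady state of a diffusive logistic-type equation is unique if and only if the principal eigenvalue of the linearization at $0$ is negative) and then checks this sign condition by inserting the test function $\varphi=r_P/\Vert r_P\Vert_{L^2}$ into the Rayleigh quotient, obtaining the bound $-\int_\Omega r_P^3/\int_\Omega r_P^2<0$. Your argument is instead self-contained: the substitution $w=P^*/r_P$ not only removes the advection but, more importantly, turns the heterogeneous carrying capacity into the \emph{constant} $1/s_P$, so that comparing $\max_{\overline\Omega}w$ and $\min_{\overline\Omega}w$ with $1/s_P$ at extremum points closes the proof. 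This is arguably more illuminating (it explains directly why the ideal free equilibrium is exactly proportional to $r_P$), at the price of not producing the eigenvalue information that the paper's computation also feeds into other parts of the analysis.

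Two steps should be tightened. First, Hopf's lemma at a boundary extremum cannot be invoked merely because ``$w$ is non-constant'': the semilinear right-hand side has no sign a priori, so $w$ is not globally a sub- or supersolution of anything to which Hopf applies. Argue by contradiction instead: if $M:=\max_{\overline\Omega}w>1/s_P$ were attained at $x_M\in\partial\Omega$, then $r_P^2w(1-s_Pw)<0$, hence $\nabla\cdot(r_P\nabla w)>0$, in a neighborhood $N$ of $x_M$; on $N\cap\Omega$ the function $w$ is then a strict subsolution of the zeroth-order-free operator $-\nabla\cdot(r_P\nabla\cdot)$, the strong maximum principle forces $w<M$ in $N\cap\Omega$, and Hopf's lemma yields $\partial_n w(x_M)>0$, contradicting the Neumann condition (and symmetrically at a boundary minimum). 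Second, rewriting the operator in non-divergence form fixes the regularity of the coefficients but does not resolve the sign issue of $c(x)=r_P^2(1-s_Pw)$; for the strict positivity of $w$ use the standard truncation: since $w\ge0$ solves $-\sigma_P\nabla\cdot(r_P\nabla w)=cw$, one has $-\sigma_P\nabla\cdot(r_P\nabla w)+c^-w=c^+w\ge0$ with $c^-:=\max(-c,0)\ge0$ bounded, and the strong maximum principle and Hopf's lemma for operators with nonnegative zeroth-order coefficient apply to this inequality. Both fixes are routine.
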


\begin{proof}
 First of all, an easy computation shows that $r_P/s_P$ is a solution of \eqref{periodiceq}.
 
 According to \cite[Proposition 3.3]{cantrellcosner2004}, the nonnegative nonzero solution $P^*$ is unique if and only if the principal eigenvalue associated to the linearized operator around $0$, $\lambda_1(\mathcal{L}_0)$, is negative.
 
 We remind that $r_P$ is positive everywhere on $\overline{\Omega}$. Using the variational formulation of the elliptic principal eigenvalue $\lambda_1(\mathcal{L}_0)$,
 \[
 \begin{array}{ll}
  \lambda_1(\mathcal{L}_0) & =\inf_{\varphi\in\mathcal{C}^{2+\alpha}(\Omega),\Vert\varphi\Vert_{L^2}=1}\displaystyle\int_{\Omega}\sigma_Pr_P\nabla\left(\dfrac{\varphi}{r_P}\right)\cdot\nabla\varphi-r_P\varphi^2\\
  & \leq -\dfrac{\int_{\Omega}r_P^3}{\int_{\Omega}r_P^2}\quad\mbox{ (with }\varphi=r_P/\|r_P\|_{L^2})\\
  & <0.
 \end{array}
 \]
 
 This ends the proof.
\end{proof}

\subsection{Linear stability}

Denote $\varphi(x,t):=(I(x,t),V_i(x,t),V_s(x,t),P(x,t)-P^*(x))^T$.

From \eqref{eqspring}, a formal Taylor expansion at first order around $U_P=(0,0,0,r_P/s_P)^T$ yields
\[
\partial_t\varphi+\mathcal{L}\varphi=o(\Vert(I,V_i,V_s,P-P^*)\Vert)(1,1,1,1)^T
\]
where 
\[
\begin{array}{ll}
 \mathcal{L} & :=\begin{pmatrix}
\mathcal{L}_I & -\beta_{VH}H & 0 & 0\\
0 & \mathcal{L}_{V_i} & 0 & 0\\
0 & -\alpha-b_V & \mathcal{L}_{V_s} & 0\\
0 & -\gamma h\dfrac{r_P}{s_P} & -\gamma h\dfrac{r_P}{s_P} & \mathcal{L}_P
\end{pmatrix}\\
& :=\begin{pmatrix}
0 & -\beta_{VH}H & 0 & 0\\
0 & -\sigma_V\Delta+\alpha+d_V+h\dfrac{r_P}{s_P} & 0 & 0\\
0 & -\alpha-b_V & -\sigma_V\Delta-r_V+h\dfrac{r_P}{s_P} & 0\\
0 & -\gamma h\dfrac{r_P}{s_P} & -\gamma h\dfrac{r_P}{s_P} & -\sigma_P\overline{\mathcal{L}}+r_P
\end{pmatrix}.
\end{array}
\]

We look for solutions of the form $\varphi(x,t)=e^{-\lambda t}\psi(x)$, where $(\lambda,\psi)$ is an eigenpair of $\mathcal{L}$. 
Hence, we are focused on the linear elliptic eigenvalue problem
\[
 \mathcal{L}\psi=\lambda\psi.
\]

In the following, we will write $\operatorname{eig}(\mathcal{L})$ the set of eigenvalues of an operator $\mathcal{L}$, and $\sigma(\mathcal{L})$ its spectrum.

It is well known that any scalar elliptic operator $\mathcal{L}$ admits a unique principal eigenvalue. The Krein-Rutman theorem (see \cite{takac}) gives an associated positive principal eigenfunction to the principal eigenvalue.

\begin{definition}
 Let $\mathcal{L}$ be a scalar elliptic operator. The principal eigenvalue of $\mathcal{L}$, noted $\lambda_1(\mathcal{L})$, is the only eigenvalue associated to a positive eigenfunction.
\end{definition}

We also need a different definition that applies to more general operators.
\begin{definition}
 Let $\mathcal{L}$ be a linear operator with eigenvalues. We set
 \[
  \Lambda(\mathcal{L}):=\inf\;\operatorname{Re}(\operatorname{eig}(\mathcal{L})).
 \]
\end{definition}

We are interested in the stability of $U_P$. We will use the following terminology:

\begin{definition}\label{deflinearstability}
$U_P$ is :
\begin{itemize}
 \item linearly unstable if $\Lambda(\mathcal{L})<0$;
 \item linearly marginally stable if $\Lambda(\mathcal{L})=0$;
 \item linearly stable if $\Lambda(\mathcal{L})>0$.
\end{itemize}
\end{definition}

 The spectral problem $\mathcal{L}\psi=\lambda\psi$ is non-scalar, non-monotonous and non-compact (the equation on $I$ is not elliptic), hence difficult to study. In what follows, we take advantage of the specific form of the operator $\mathcal{L}$: a block triangular matrix, with each block being itself triangular.

We first need the following lemma.

\begin{lemma}
\label{spectrumsetofeigenvalues}
Let $\mathcal{E}$ a scalar elliptic operator with coefficients in $\mathcal{C}^{\alpha}(\overline{\Omega})$. Then, for any $K>0$ large enough, the spectrum of $\mathcal{E}+K$ is exactly the set of eigenvalues of $\mathcal{E}+K$.
\end{lemma}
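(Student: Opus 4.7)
The plan is to reduce the statement to the standard spectral theory of compact operators via an appropriate resolvent identity. I fix a functional setting, for instance $X=\mathcal{C}^{\alpha}(\overline{\Omega})$ with domain $D(\mathcal{E})=\{u\in\mathcal{C}^{2+\alpha}(\overline{\Omega})\,:\,\partial_n u=0\text{ on }\partial\Omega\}$ (any reasonable choice, such as $L^2(\Omega)$ with $H^2$-Neumann domain, would work equally well).

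First I would establish that for $K>0$ large enough the operator $\mathcal{E}+K:D(\mathcal{E})\to X$ is a bijection with bounded inverse. Writing $\mathcal{E}=-a_{ij}\partial_{ij}+b_i\partial_i+c$ (with uniformly elliptic principal part), choose $K>\|c\|_{L^{\infty}}+1$; then the zeroth-order coefficient of $\mathcal{E}+K$ is strictly positive, so the weak maximum principle (or a Lax--Milgram/coercivity argument in the $L^2$-setting) gives uniqueness. Existence of a solution to $(\mathcal{E}+K)u=f$ for every $f\in\mathcal{C}^{\alpha}(\overline{\Omega})$ then follows from standard Schauder theory for Neumann problems, and the associated estimate $\|u\|_{\mathcal{C}^{2+\alpha}}\leq C\|f\|_{\mathcal{C}^{\alpha}}$ shows that $(\mathcal{E}+K)^{-1}$ is bounded from $X$ into $D(\mathcal{E})$.

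Next I would observe that the inclusion $\mathcal{C}^{2+\alpha}(\overline{\Omega})\hookrightarrow\mathcal{C}^{\alpha}(\overline{\Omega})$ is compact (Arzelà--Ascoli), so regarded as an operator from $X$ to $X$, the resolvent $T:=(\mathcal{E}+K)^{-1}$ is compact. The Riesz--Schauder theorem then tells us that $\sigma(T)\setminus\{0\}$ consists entirely of eigenvalues of finite multiplicity, with $0$ as the only possible accumulation point.

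Finally I would transfer this information back to $\mathcal{E}+K$ through the standard identity $\sigma(\mathcal{E}+K)\setminus\{0\}=\{1/\mu\,:\,\mu\in\sigma(T)\setminus\{0\}\}$, together with the fact that $0\notin\sigma(\mathcal{E}+K)$ because $\mathcal{E}+K$ is invertible. Each nonzero $\mu\in\sigma(T)$ is an eigenvalue with eigenfunction $u$, and the same $u$ is an eigenfunction of $\mathcal{E}+K$ with eigenvalue $1/\mu$; conversely, any $\lambda\in\sigma(\mathcal{E}+K)$ corresponds to a nonzero eigenvalue of $T$. Hence $\sigma(\mathcal{E}+K)=\operatorname{eig}(\mathcal{E}+K)$, which is the desired conclusion. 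The main delicate point is the first step, namely the choice of $K$ and of the functional framework in which the Schauder-type resolvent estimate is available together with a compact embedding; everything else is a direct application of the classical spectral theory of compact operators.
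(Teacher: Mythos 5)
Your proposal is correct and follows essentially the same route as the paper: both pass to the resolvent $(\mathcal{E}+K)^{-1}$ viewed as an operator on $\mathcal{C}^{\alpha}(\overline{\Omega})$, obtain its compactness from the Arzel\`a--Ascoli embedding of $\mathcal{C}^{2+\alpha}(\overline{\Omega})$ into $\mathcal{C}^{\alpha}(\overline{\Omega})$, and transfer the Riesz--Schauder/Fredholm conclusion back through the resolvent spectral mapping $\lambda\leftrightarrow 1/\lambda$. The only difference is cosmetic: you invoke the spectral mapping for resolvents and the Riesz--Schauder theorem directly, whereas the paper verifies the equivalence of invertibility of $\mathcal{E}+K-\lambda$ and $(\mathcal{E}+K)^{-1}-1/\lambda$ by hand and tracks the canonical injection $\iota$ explicitly.
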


\begin{proof}
We consider the problem $(\mathcal{E}+K)u=f$, for a given $f\in\mathcal{C}^{\alpha}(\overline{\Omega})$, where the solution $u$ belongs to $\mathcal{C}^{2+\alpha}(\overline{\Omega})$. Given Neumann boundary conditions, the existence and unicity theorem (see \cite[Page 447, Theorem 1.3]{pao}) shows that, for $K>0$ large enough, $\mathcal{E}+K:\mathcal{C}^{2+\alpha}(\overline{\Omega})\to\mathcal{C}^{\alpha}(\overline{\Omega})$ is bijective (and continuous). Furthermore, its inverse $(\mathcal{E}+K)^{-1}:\mathcal{C}^{\alpha}(\overline{\Omega})\to\mathcal{C}^{2+\alpha}(\overline{\Omega})$ is classically continuous. Hence, one can define $\iota:\mathcal{C}^{2+\alpha}(\overline{\Omega})\to\mathcal{C}^{\alpha}(\overline{\Omega})$ the canonical injection (which is compact, as a consequence of Arzela--Ascoli theorem), and $T:=\iota\circ(\mathcal{E}+K)^{-1}:\mathcal{C}^{\alpha}(\overline{\Omega})\to\mathcal{C}^{\alpha}(\overline{\Omega})$, so that $T$ is a compact operator.

Let $\lambda\in\sigma(\mathcal{E}+K)$ ($\lambda\neq0$). We claim the following:
\[
 \mathcal{E}+K-\lambda\mbox{ is invertible }\Leftrightarrow (\mathcal{E}+K)^{-1}-\dfrac{1}{\lambda}\mbox{ is invertible}.
\]

Indeed, suppose that $\mathcal{E}+K-\lambda$ is invertible. With $(\mathcal{E}+K)^{-1}$ invertible, their product $1-\lambda(\mathcal{E}+K)^{-1}=-\lambda((\mathcal{E}+K)^{-1}-1/\lambda)$ is also invertible. The proof of the converse is very similar.

We deduce that $1/\lambda\in\sigma((\mathcal{E}+K)^{-1})$.

If $1/\lambda\in\operatorname{eig}((\mathcal{E}+K)^{-1})$, then from $(\mathcal{E}+K)^{-1}u=(1/\lambda)u$ we deduce $\iota\circ(\mathcal{E}+K)^{-1}u=(1/\lambda)\iota\circ u=(1/\lambda)u$, hence $1/\lambda\in\sigma(T)$.

If $1/\lambda\notin\operatorname{eig}((\mathcal{E}+K)^{-1})$, then $(\mathcal{E}+K)^{-1}-1/\lambda$ is not surjective. Let $u\in\mathcal{C}^{2+\alpha}(\overline{\Omega})$ not attained by $(\mathcal{E}+K)^{-1}-1/\lambda$. Suppose we can find $f\in\mathcal{C}^{\alpha}(\overline{\Omega})$ such that $(T-1/\lambda)f=\iota\circ u$. Then $\iota\circ((\mathcal{E}+K)^{-1}-1/\lambda)f=(\iota\circ(\mathcal{E}+K)^{-1}-(1/\lambda)\iota)f=\iota\circ u$. By injectivity of $\iota$, $((\mathcal{E}+K)^{-1}-1/\lambda)f=u$, which contradicts our assumption on $u$. Therefore $T-1/\lambda$ is not surjective, and $1/\lambda\in\sigma(T)$.

In both cases, $1/\lambda\in\sigma(T)$.

By Fredholm's alternative, $1/\lambda\in\operatorname{eig}(T)$. By injectivity of $\iota$, $1/\lambda\in\operatorname{eig}((\mathcal{E}+K)^{-1})$. This shows that $\lambda\in\operatorname{eig}(\mathcal{E}+K)$, therefore $\sigma(\mathcal{E}+K)=\operatorname{eig}(\mathcal{E}+K)$.
\end{proof}

With this result, we are ready to assert the link between the eigenvalues of $\mathcal{L}$ and the eigenvalues of the other scalar operators.

\begin{proposition}\label{eigsetequalunion}
 $\operatorname{eig}(\mathcal{L})=\bigcup\limits_{Y\in\{I,V_i,V_s,P\}}\operatorname{eig}(\mathcal{L}_Y).$
\end{proposition}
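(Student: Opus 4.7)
The plan is to exploit the block lower-triangular structure of $\mathcal{L}$ after reordering the components as $(V_i, I, V_s, P)$: row 2 decouples as a scalar equation for $\psi_{V_i}$, row 1 then algebraically ties $\psi_I$ to $\psi_{V_i}$, row 3 is driven by $\psi_{V_i}$ through $\mathcal{L}_{V_s}$, and row 4 is driven by $\psi_{V_i}$ and $\psi_{V_s}$ through $\mathcal{L}_P$. The key tool is Lemma \ref{spectrumsetofeigenvalues}, applied after shifting $\lambda$ by a large constant: it yields $\sigma(\mathcal{L}_Y)=\operatorname{eig}(\mathcal{L}_Y)$ for each scalar elliptic block, so that whenever $\lambda\notin\operatorname{eig}(\mathcal{L}_Y)$ the operator $\mathcal{L}_Y-\lambda$ is bijective on the H\"older spaces at hand.

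For the inclusion $\operatorname{eig}(\mathcal{L})\subseteq\bigcup_Y\operatorname{eig}(\mathcal{L}_Y)$, I would take a nonzero eigenvector $\psi=(\psi_I,\psi_{V_i},\psi_{V_s},\psi_P)^T$ for $\lambda$ and cascade through the rows according to which components vanish. Row 2 reads $\mathcal{L}_{V_i}\psi_{V_i}=\lambda\psi_{V_i}$, so if $\psi_{V_i}\not\equiv 0$ then $\lambda\in\operatorname{eig}(\mathcal{L}_{V_i})$. Otherwise, row 1 reduces to $\lambda\psi_I=0$: either $\lambda=0\in\operatorname{eig}(\mathcal{L}_I)$ (since $\mathcal{L}_I=0$), or $\psi_I\equiv 0$ and one moves to row 3, where $\psi_{V_s}\not\equiv 0$ forces $\lambda\in\operatorname{eig}(\mathcal{L}_{V_s})$; failing that, in row 4 the component $\psi_P$ must be nonzero (else $\psi=0$), giving $\lambda\in\operatorname{eig}(\mathcal{L}_P)$.

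For the reverse inclusion, I would start from $\lambda\in\operatorname{eig}(\mathcal{L}_Y)$ with eigenfunction $\phi$ and build a full eigenvector componentwise. The boundary cases are immediate: $\psi=(1,0,0,0)^T$ handles $Y=I$ and $\psi=(0,0,0,\phi)^T$ handles $Y=P$. For $Y=V_s$, set $\psi_I=\psi_{V_i}=0$ and $\psi_{V_s}=\phi$, then solve $(\mathcal{L}_P-\lambda)\psi_P=\gamma h(r_P/s_P)\phi$ using Lemma \ref{spectrumsetofeigenvalues} whenever $\lambda\notin\operatorname{eig}(\mathcal{L}_P)$; if instead $\lambda\in\operatorname{eig}(\mathcal{L}_P)$, the membership in the union is already realized by the $Y=P$ construction. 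The $Y=V_i$ case is analogous: set $\psi_{V_i}=\phi$, recover $\psi_I=-\beta_{VH}H\phi/\lambda$ from row 1 (noting that $\lambda\neq 0$ because $\mathcal{L}_{V_i}$ is self-adjoint with a strictly positive zeroth order coefficient, hence has only positive eigenvalues), then solve row 3 for $\psi_{V_s}$ and row 4 for $\psi_P$, falling back on the previous argument whenever an intermediate inversion fails.

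The main subtlety lies in this downstream cascade: one has to verify that every failure of solvability in a lower row is already accounted for by $\lambda$ belonging to one of the blocks that appear later in the union. Lemma \ref{spectrumsetofeigenvalues} is what makes this clean, as it converts ``$\lambda\in\operatorname{eig}(\mathcal{L}_Y)$ or $\mathcal{L}_Y-\lambda$ is invertible'' into an exhaustive dichotomy, so that every branch of the case analysis terminates either by producing an eigenvector or by recognizing that another branch already does.
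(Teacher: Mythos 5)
Your proposal is correct and follows essentially the same route as the paper's proof: the same forward cascade through the triangular structure, and the same componentwise construction for the reverse inclusion, with Lemma \ref{spectrumsetofeigenvalues} supplying the dichotomy needed to invert downstream blocks and with fallbacks to simpler eigenvectors when an inversion fails. The only cosmetic difference is that you resolve row 1 in the $Y=V_i$ case by observing that the eigenvalues of $\mathcal{L}_{V_i}$ are positive (so $\lambda\neq 0$), whereas the paper instead falls back to the trivial eigenvector $(\varphi,0,0,0)^T$ when $\lambda\in\operatorname{eig}(\mathcal{L}_I)$.
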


\begin{proof}

Let $K>0$ so large that Lemma \ref{spectrumsetofeigenvalues} can be applied with $\mathcal{E}=\mathcal{L}_{V_i},\mathcal{L}_{V_s},\mathcal{L}_P$. Let $\lambda$ be an eigenvalue of $\mathcal{L}+K$ and $\varPhi(x)=(I(x),V_i(x),V_s(x),P(x))^T$ an eigenvector associated to it.

From the second line of $(\mathcal{L}+K)\varPhi(x)=\lambda\varPhi(x)$, we deduce
\[
(\mathcal{L}_{V_i}+K)V_i(x)=\lambda V_i(x),
\]
hence $\lambda$ is an eigenvalue of $\mathcal{L}_{V_i}+K$, or $V_i(x)=0$.

If $V_i(x)=0$, looking at the first line of $(\mathcal{L}+K)\varPhi(x)=\lambda\varPhi(x)$, we deduce
\[
(\mathcal{L}_I+K)I(x)=\lambda I(x),
\]
hence $\lambda$ is an eigenvalue of $\mathcal{L}_I+K$, or $I(x)=0$.

If $V_i(x)=0$ and $I(x)=0$, looking at the third line of $(\mathcal{L}+K)\varPhi(x)=\lambda\varPhi(x)$, we deduce
\[
(\mathcal{L}_{V_s}+K)V_s(x)=\lambda V_s(x),
\]
hence $\lambda$ is an eigenvalue of $\mathcal{L}_{V_s}+K$, or $V_s(x)=0$.

If $V_i(x)=0$, $I(x)=0$ and $V_s(x)=0$, looking at the fourth line of $(\mathcal{L}+K)\varPhi(x)=\lambda\varPhi(x)$, we deduce
\[
(\mathcal{L}_P+K)P(x)=\lambda P(x),
\]
hence $\lambda$ is an eigenvalue of $\mathcal{L}_P+K$, and $P$ cannot be null, otherwise $\varPhi$ would be null.

Thus, necessarily, $\lambda$ is an eigenvalue of one of the four operators $\mathcal{L}_I+K$, $\mathcal{L}_{V_i}+K$, $\mathcal{L}_{V_s}+K$, or $\mathcal{L}_P+K$.

Reciprocally, let us show that if $\lambda$ is an eigenvalue of one of the four operators $\mathcal{L}_I+K$, $\mathcal{L}_{V_i}+K$, $\mathcal{L}_{V_s}+K$, $\mathcal{L}_P+K$, then $\lambda$ is an eigenvalue of $\mathcal{L}+K$. By Lemma \ref{spectrumsetofeigenvalues}, for $K>0$ large enough and for all $Y\in\{V_i,V_s,P\}$, the spectrum of $\mathcal{L}_Y+K$ is exactly its set of eigenvalues. Also, $\mathcal{L}_I+K-\lambda=K-\lambda$ is trivially invertible for $\lambda\neq K$, hence $\operatorname{eig}(\mathcal{L}_I+K)=\sigma(\mathcal{L}_I+K)=\{ K\}$.

\textbf{First case:}

Let $\lambda$ be an eigenvalue of $\mathcal{L}_I+K$, and $\varphi(x)$ an eigenfunction associated to it.

Then we have $(\mathcal{L}+K)(\varphi(x),0,0,0)^T=\lambda(\varphi(x),0,0,0)^T$ and $\lambda$ is an eigenvalue of $\mathcal{L}+K$.

\textbf{Second case:}

Let $\lambda$ be an eigenvalue of $\mathcal{L}_{V_i}+K$, and $\psi(x)$ an eigenfunction associated to it. We are going to construct three functions $\varphi$, $\eta$ and $\mu$ such that

$(\mathcal{L}+K)(\varphi(x),\psi(x),\eta(x),\mu(x))^T=\lambda(\varphi(x),\psi(x),\eta(x),\mu(x))^T$.

First of all, as seen in the first case, if $\lambda\in\operatorname{eig}(\mathcal{L}_I+K)$, we have an obvious eigenvector showing that $\lambda$ is an eigenvalue of $\mathcal{L}+K$. Supposing now that $\lambda\notin\operatorname{eig}(\mathcal{L}_I+K)=\sigma(\mathcal{L}_I+K)$. $\mathcal{L}_I+K-\lambda$ is invertible, hence we can define $\varphi(x)=-(\mathcal{L}_I+K-\lambda)^{-1}\beta_{HV}H\psi(x)$.

If $\lambda\in\operatorname{eig}(\mathcal{L}_P+K)$, again we have a obvious eigenvector of $\mathcal{L}+K$: $(0,0,0,\mu(x))^T$ with $\mu$ an eigenfunction of $\mathcal{L}_P+K$ associated to $\lambda$. We suppose now that $\lambda\notin\operatorname{eig}(\mathcal{L}_P+K)=\sigma(\mathcal{L}_P+K)$.

If $\lambda\in\operatorname{eig}(\mathcal{L}_{V_s}+K)$, we pose $\eta$ an eigenfunction of $\mathcal{L}_{V_s}+K$ associated to it, and $\mu(x)=(\mathcal{L}_P+K-\lambda)^{-1}\gamma hP^*\eta(x)$. This way, we have $(\mathcal{L}+K)(0,0,\eta(x),\mu(x))^T=\lambda(0,0,\eta(x),\mu(x))^T$. We suppose now that $\lambda\notin\operatorname{eig}(\mathcal{L}_{V_s}+K)=\sigma(\mathcal{L}_{V_s}+K)$.

We define $\eta(x)=(\mathcal{L}_{V_s}+K-\lambda)^{-1}(\alpha+b_V)\psi(x)$ and

$\mu(x)=(\mathcal{L}_P+K-\lambda)^{-1}\gamma hP^*(\psi(x)+\eta(x))$. With $\varphi$, $\psi$, $\eta$ and $\mu$ defined as such, we have

$(\mathcal{L}+K)(\varphi(x),\psi(x),\eta(x),\mu(x))^T=\lambda(\varphi(x),\psi(x),\eta(x),\mu(x))^T$, thus $\lambda$ is an eigenvalue of $\mathcal{L}+K$.

\textbf{Third case:}

Let $\lambda$ be an eigenvalue of $\mathcal{L}_{V_s}+K$, and $\eta(x)$ an eigenfunction associated to it. If $\lambda\in\operatorname{eig}(\mathcal{L}_P+K)$, as seen before, we have an obvious eigenvector of $\mathcal{L}+K$. Suppose that $\lambda\notin\operatorname{eig}(\mathcal{L}_P+K)=\sigma(\mathcal{L}_P+K)$. We define $\mu(x)=(\mathcal{L}_P+K-\lambda)^{-1}\gamma hP^*\eta(x)$, and we have $(\mathcal{L}+K)(0,0,\eta(x),\mu(x))^T=\lambda(0,0,\eta(x),\mu(x))^T$, and $\lambda$ is an eigenvalue of $\mathcal{L}+K$.

\textbf{Fourth case:}

Let $\lambda$ be an eigenvalue of $\mathcal{L}_P+K$, and $\mu(x)$ an eigenfunction associated to it.
$(0,0,0,\mu(x))^T$ is an eigenvector of $\mathcal{L}+K$ associated to $\lambda$.

We showed that:
\[
 \operatorname{eig}(\mathcal{L}+K)=\bigcup\limits_{Y\in\{I,V_i,V_s,P\}}\operatorname{eig}(\mathcal{L}_Y+K).
\]

Therefore
\[
\begin{array}{ll}
 \operatorname{eig}(\mathcal{L}) & = \operatorname{eig}(\mathcal{L}+K)-K\\
 & = \bigcup\limits_{Y\in\{I,V_i,V_s,P\}}\operatorname{eig}(\mathcal{L}_Y+K)-K\\
 & = \bigcup\limits_{Y\in\{I,V_i,V_s,P\}}\operatorname{eig}(\mathcal{L}_Y).
\end{array}
\]

\end{proof}

With Proposition \ref{eigsetequalunion} in mind, the sign of the principal eigenvalues (in our case, the minimum of all eigenvalues) of the scalar operators actually yields a more precise result.

\begin{corollary}\label{eigvectequaleigscalar}
 $\Lambda(\mathcal{L})\in\operatorname{eig}(\mathcal{L})$ and $\Lambda(\mathcal{L)}=\min(0,\lambda_1(\mathcal{L}_{V_s}))$.
\end{corollary}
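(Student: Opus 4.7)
The plan is to leverage Proposition \ref{eigsetequalunion}, which reduces the study of $\operatorname{eig}(\mathcal{L})$ to the study of the spectra of the four diagonal blocks $\mathcal{L}_I$, $\mathcal{L}_{V_i}$, $\mathcal{L}_{V_s}$, $\mathcal{L}_P$. For each block, I would identify the infimum of the real parts of its eigenvalues, then take the minimum over the four blocks. Since this minimum is a finite minimum of numbers each lying in $\operatorname{eig}(\mathcal{L})$, membership $\Lambda(\mathcal{L}) \in \operatorname{eig}(\mathcal{L})$ will be automatic once the identification $\Lambda(\mathcal{L}) = \min(0, \lambda_1(\mathcal{L}_{V_s}))$ is established.

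The first block is trivial: $\mathcal{L}_I$ is just multiplication by $0$, so $\operatorname{eig}(\mathcal{L}_I) = \{0\}$. The three remaining blocks are scalar elliptic operators with H\"older coefficients on $\overline{\Omega}$ under Neumann boundary conditions; the Krein--Rutman framework invoked in the paper (see \cite{takac}) then furnishes for each a real principal eigenvalue $\lambda_1(\mathcal{L}_Y)$ associated with a positive eigenfunction, and a classical argument (comparison of the spectral projection on the principal eigenfunction with any other eigenpair, after applying the maximum principle) yields $\lambda_1(\mathcal{L}_Y) = \inf \operatorname{Re}(\operatorname{eig}(\mathcal{L}_Y))$. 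To conclude it then suffices to show $\lambda_1(\mathcal{L}_{V_i}) > 0$ and $\lambda_1(\mathcal{L}_P) > 0$, which forces the overall minimum to be taken among $\{0, \lambda_1(\mathcal{L}_{V_s})\}$.

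The positivity of $\lambda_1(\mathcal{L}_{V_i})$ is the easy part: its Rayleigh quotient reads
\[
\lambda_1(\mathcal{L}_{V_i}) = \inf_{\|\psi\|_{L^2}=1} \int_{\Omega} \sigma_V |\nabla \psi|^2 + \left(\alpha + d_V + h\frac{r_P}{s_P}\right)\psi^2,
\]
which is bounded below by $\min_{\overline{\Omega}}(\alpha + d_V + h r_P/s_P) > 0$. The main obstacle is the positivity of $\lambda_1(\mathcal{L}_P)$, because $\mathcal{L}_P = -\sigma_P \overline{\mathcal{L}} + r_P$ is non-self-adjoint due to the ideal free dispersal structure of $\overline{\mathcal{L}}$. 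To overcome this, I would perform the substitution $\psi = r_P u$, which turns the eigenvalue equation $\mathcal{L}_P \psi = \lambda \psi$ into the self-adjoint weighted problem $-\sigma_P \nabla \cdot (r_P \nabla u) + r_P^2 u = \lambda r_P u$; multiplying by $u$ and integrating yields the symmetric Rayleigh quotient
\[
\lambda_1(\mathcal{L}_P) = \inf_{u} \frac{\sigma_P \int_{\Omega} r_P |\nabla u|^2 + \int_{\Omega} r_P^2 u^2}{\int_{\Omega} r_P u^2} \;\geq\; \min_{\overline{\Omega}} r_P \;>\; 0,
\]
using that $r_P$ is positive on the compact $\overline{\Omega}$. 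Once both positivities are in hand, the four-way minimum collapses to $\min(0, \lambda_1(\mathcal{L}_{V_s}))$, and this value is attained either by $0 \in \operatorname{eig}(\mathcal{L}_I) \subset \operatorname{eig}(\mathcal{L})$ or by $\lambda_1(\mathcal{L}_{V_s}) \in \operatorname{eig}(\mathcal{L}_{V_s}) \subset \operatorname{eig}(\mathcal{L})$, concluding the proof.
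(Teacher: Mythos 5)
Your proposal is correct and follows essentially the same route as the paper: reduce to the four blocks via Proposition \ref{eigsetequalunion}, identify $\Lambda(\mathcal{L}_Y)$ with the principal eigenvalue of each scalar block, and prove $\lambda_1(\mathcal{L}_{V_i})>0$ and $\lambda_1(\mathcal{L}_P)>0$ by Rayleigh-quotient arguments, your substitution $\psi=r_Pu$ being exactly the paper's step of multiplying the eigenvalue equation by $\varphi/r_P$ and integrating. The only cosmetic difference is that the paper justifies $\Lambda(\mathcal{L}_Y)=\lambda_1(\mathcal{L}_Y)$ through the shift-by-$K$ device of Lemma \ref{spectrumsetofeigenvalues}, where you appeal directly to the standard Krein--Rutman fact.
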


\begin{proof}

By Proposition \ref{eigsetequalunion}, and by definition of $\Lambda(\mathcal{L})$, $\Lambda(\mathcal{L})=\underset{Y\in\{I,V_i,V_s,P\}}{\min}\Lambda(\mathcal{L}_Y)$. Also, by Lemma \ref{spectrumsetofeigenvalues}, and by the fact that $\mathcal{L}_I=0$, for $K>0$ large enough, $\operatorname{eig}(\mathcal{L}_Y+K)=\sigma(\mathcal{L}_Y+K)$ for any $Y\in\{I,V_i,V_s,P\}$.

Hence, for all $Y\in\{V_i,V_s,P\}$, $\Lambda(\mathcal{L}_Y+K)=\lambda_1(\mathcal{L}_Y+K)$, which yields $\Lambda(\mathcal{L}_Y)=\lambda_1(\mathcal{L}_Y)$ . 
Moreover, as $-\lambda\mbox{Id}$ is not invertible if and only if $\lambda=0$, $\sigma(0)=\{0\}$, whence $\Lambda(\mathcal{L}_I)=0$.
Thus we only have to prove that $\lambda_1(\mathcal{L}_{V_i})>0$ and $\lambda_1(\mathcal{L}_P)>0$.

Using the variational formulation of the principal eigenvalue,
\[
 \lambda_1(\mathcal{L}_{V_i})=\inf_{\varphi\in\mathcal{C}^{2+\alpha}(\Omega),\Vert\varphi\Vert_{L^2}=1}\displaystyle\int_{\Omega}\sigma_V|\nabla\varphi|^2+(\alpha+d_V+h\dfrac{r_P}{s_P})\varphi^2>0.
\]

Finally, eigenelements $(\lambda,\varphi)$ of $\mathcal{L}_P$ verify
\[
 -\sigma_P\nabla\cdot\left(r_P\nabla\left(\dfrac{\varphi}{r_P}\right)\right)+r_P\varphi=\lambda\varphi,
\]
hence, multiplying the equation by $\varphi/r_P$ and integrating over $\Omega$,
\begin{center}
$\begin{array}{ll}
 \lambda\displaystyle\int_{\Omega}\dfrac{\varphi^2}{r_P} & =\displaystyle\int_{\Omega}-\sigma_P\nabla\cdot\left(r_P\nabla\left(\dfrac{\varphi}{r_P}\right)\right)\dfrac{\varphi}{r_P}+\varphi^2\\
 & =\displaystyle\int_{\Omega}\sigma_Pr_P\left|\nabla\dfrac{\varphi}{r_P}\right|^2+\varphi^2.
\end{array}$
\end{center}

By definition, the principal eigenfunction $\varphi_1$ of $\mathcal{L}_P$ is not identically $0$, therefore $\int_{\Omega}(\varphi_1^2/r_P)\neq0$. $\varphi_1$ being defined up to a multiplicative constant, we can choose it so that $\int_{\Omega}(\varphi_1^2/r_P)=1$. It yields $\lambda_1(\mathcal{L}_P)>0$.
\end{proof}

\subsection{Nonlinear stability}

We now link the linear stability with the nonlinear stability of $U_P$, or in other words we prove Theorem \ref{largetimebehaviour}. As Corollary \ref{eigvectequaleigscalar} shows, $U_P$ is marginally linearly stable if and only if $\lambda_1(\mathcal{L}_{V_s})\geq 0$. With the help of \cite[Proposition 3.2]{cantrellcosner2004} and a uniform Harnack inequality of \cite[Theorem 2.5]{huska}, we can make precise the large-time behaviour of our solution depending on the sign of $\lambda_1(\mathcal{L}_{V_s})$.

\begin{proof}[Proof of Theorem \ref{largetimebehaviour}]
 $(i)$ First of all, $0$ being an obvious subsolution of \eqref{eqspring}, we have $P,V_i,V_s\geq0$.
 
 $P$ and $V:=V_i+V_s$ are solutions of the system
 \begin{equation}\label{preypredsystem}
     \begin{cases}
     \partial_tV-\sigma_V\Delta V & =r_VV-s_VV^2-hPV\\
     \partial_t P-\sigma_P\overline{\mathcal{L}}(P) & =r_PP-s_PP^2+\gamma hVP,
     \end{cases}
 \end{equation}
 for $(x,t)\in\Omega\times\mathbb{R}_+^*$, with Neumann boundary conditions and initial conditions $P(\cdot,0)=P_0$, $V(\cdot,0)=V_0:=V_{i,0}+V_{s,0}$.
 
 Let $\underline{P}$ be a solution of
 \begin{equation*}
     \begin{cases}
      \underline{P}(\cdot,0)=P_0 & \mbox{ in }\Omega\\
      \partial_t \underline{P}-\sigma_P\overline{\mathcal{L}}(\underline{P})=r_P\underline{P}-s_P\underline{P}^2 & \mbox{ in }\Omega\times\mathbb{R}_+^*\\
      \dfrac{\partial\underline{P}}{\partial n}=0 & \mbox{ on }\partial\Omega\times\mathbb{R}_+^*.
     \end{cases}
 \end{equation*}
 
 By the nonnegativity of $\gamma$, $h$, $V$ and $\underline{P}$, $\underline{P}$ is a subsolution of the second equation of \eqref{preypredsystem}. Moreover, a linearization of the associated elliptic problem around the steady state $0$ yields the spectral problem
 \[
  (-\sigma_P\overline{\mathcal{L}}-r_P)\varphi=\lambda\varphi.
 \]
 
 By monotonicity of the principal eigenvalue with respect to the zeroth order term,
 \[
  \lambda_1(-\sigma_P\overline{\mathcal{L}}-r_P)\leq\lambda_1(-\sigma_P\overline{\mathcal{L}}-\underset{x\in\overline{\Omega}}{\min}r_P)\leq-\underset{x\in\overline{\Omega}}{\min}r_P<0.
 \]
 
 We used the fact that ($\lambda$,$\varphi$)=($-\underset{x\in\overline{\Omega}}{\min}r_P$,$r_P$) are eigenelements of the operator $-\sigma_P\overline{\mathcal{L}}-\underset{x\in\overline{\Omega}}{\min}r_P$.

 \cite[Proposition 3.2]{cantrellcosner2004} and Proposition \ref{unicityofPstar} yield the uniform convergence of $\underline{P}$ towards $\dfrac{r_P}{s_P}$.
 
 Let $\varepsilon>0$. There exists $t_0>0$ such that for all $x\in \Omega$ and $t\geq t_0$, $P(x,t)\geq\underline{P}(x,t)\geq\dfrac{r_P(x)}{s_P}-\varepsilon$. Denote $V_{t_0}:=V(\cdot,t_0)$, from the solution of \eqref{eqspring} taken at time $t=t_0$.
 
 Let $\overline{V}$ be a solution of
 \begin{equation*}
     \begin{cases}
      \overline{V}(\cdot,t_0)=V_{t_0} & \mbox{ in }\Omega\\
      \partial_t \overline{V}-\sigma_V\Delta\overline{V}=r_V\overline{V}-h(\dfrac{r_P}{s_P}-\varepsilon)\overline{V} & \mbox{ in }\Omega\times[t_0,+\infty)\\
      \dfrac{\partial\overline{V}}{\partial n}=0 & \mbox{ on }\partial\Omega\times[t_0,+\infty).
     \end{cases}
 \end{equation*}
 
 By our choice of $t_0$, $\overline{V}$ is a supersolution of the first equation of \eqref{preypredsystem} with $V_{t_0}$ as initial condition. Moreover, $\overline{V}$ is below $\underset{x\in\Omega}{\max}V(x,t_0)\underset{x\in\Omega}{\max}\varphi_{s,1}(x)e^{-(\lambda_1(\mathcal{L}_{V_s})-h\varepsilon)(t-t_0)}$, where $\varphi_{s,1}$ and $\lambda_1(\mathcal{L}_{V_s})$ satisfy
 \[
  (-\sigma_V\Delta+h(\dfrac{r_P}{s_P}-\varepsilon)-r_V)\varphi_{s,1}=(\lambda_1(\mathcal{L}_{V_s})-h\varepsilon)\varphi_{s,1}.
 \]
 with $\underset{x\in\Omega}{\min}\varphi_{s,1}(x)=1$.
 
 By positivity of $\lambda_1(\mathcal{L}_{V_s})$, one can choose $\varepsilon>0$ small enough so that $\lambda_1(\mathcal{L}_{V_s})-h\varepsilon>0$.
 
 Hence, one has the uniform convergence of
 
 $\overline{V}\leq\underset{x\in\Omega}{\max}V(x,t_0)\underset{x\in\Omega}{\max}\varphi_{s,1}(x)e^{-(\lambda_1(\mathcal{L}_{V_s})-h\varepsilon)(t-t_0)}$ to $0$ when $t\to+\infty$, yielding the uniform convergence of
 \[
 0\leq V_i,V_s\leq V\leq\overline{V}\leq\underset{x\in\Omega}{\max}V(x,t_0)\underset{\Omega}{\max}\varphi_{s,1}e^{-(\lambda_1(\mathcal{L}_{V_s})-h\varepsilon)(t-t_0)}
 \]
 to $0$. It is useful to point out that the uniform convergence of $V$ to $0$ demonstrates a control of the infection, and not the end of it.
 
 Now, take $t_0'>0$ such that for all $x\in\Omega$ and $t\geq t_0'$, $V(x,t)\leq\dfrac{\varepsilon}{\gamma h}\underset{x\in\overline{\Omega}}{\min}(r_P(x))$. Denote $P_{t_0'}:=P(\cdot,t_0')$, from the solution of \eqref{eqspring} taken at time $t=t_0'$.
 
 Let $\overline{P}$ be a solution of
 \begin{equation*}
     \begin{cases}
      \overline{P}(\cdot,t_0')=P_{t_0'} & \mbox{ in }\Omega\\
      \partial_t \overline{P}-\sigma_P\overline{\mathcal{L}}(\overline{P})=r_P\overline{P}-s_P\overline{P}^2+\varepsilon r_P\overline{P} & \mbox{ in }\Omega\times[t_0',+\infty)\\
      \dfrac{\partial\overline{P}}{\partial n}=0 & \mbox{ on }\partial\Omega\times[t_0',+\infty).
     \end{cases}
 \end{equation*}
 
 By our choice of $t_0'$, $\overline{P}$ is a supersolution of the second equation of \eqref{preypredsystem} with $P_{t_0'}$ as initial condition. Moreover, a linearization of the associated elliptic problem around the steady state $0$ yields the spectral problem
 \[
  (-\sigma_P\overline{\mathcal{L}}-(1+\varepsilon)r_P)\varphi=\lambda\varphi.
 \]
 
 By monotonicity of the principal eigenvalue with respect to the zeroth order term,
 \[
  \lambda_1(-\sigma_P\overline{\mathcal{L}}-(1+\varepsilon)r_P)\leq\lambda_1(-\sigma_P\overline{\mathcal{L}}-(1+\varepsilon)\underset{x\in\overline{\Omega}}{\min}r_P)\leq-(1+\varepsilon)\underset{x\in\overline{\Omega}}{\min}r_P<0.
 \]
 
 We used the fact that ($\lambda$,$\varphi$)=($-(1+\varepsilon)\underset{x\in\overline{\Omega}}{\min}r_P$,$r_P$) are eigenelements of the operator $-\sigma_P\overline{\mathcal{L}}-(1+\varepsilon)\underset{x\in\overline{\Omega}}{\min}r_P$.
 
  \cite[Theorem 3.2]{cantrellcosner2004} and a very similar version of Proposition \ref{unicityofPstar} (with $(1+\varepsilon)r_P$ instead of $r_P$) yield the uniform convergence of $\overline{P}$ towards $(1+\varepsilon)\dfrac{r_P}{s_P}$. Combining the convergence of $\underline{P}$ and $\overline{P}$, there exist a $t_1>0$ large enough so that, for all $x\in\Omega$ and $t\geq t_1$, $\dfrac{r_P(x)}{s_P}-\varepsilon\leq\underline{P}(x,t)\leq P(x,t)\leq\overline{P}\leq (1+\varepsilon)\dfrac{r_P(x)}{s_P}+\varepsilon$.
 
 Since $\varepsilon$ is arbitrary, this shows the uniform convergence of $P$ to $r_P/s_P$.
 
 Finally, assume $V_{i,0}\not\equiv 0$ and let $\delta>0$. Suppose by contradiction that $\underset{x\in\Omega}{\min}V_i(x,\delta)=0$. By the uniform Harnack inequality (see \cite[Theorem 2.5]{huska}), there exists $C_{\delta}>0$ such that $\underset{x\in\Omega}{\max}V_i(x,\delta)\leq C_{\delta}\underset{x\in\Omega}{\min}V_i(x,\delta)=0$ for all $x\in\Omega$, therefore we would have $V_i(x,\delta)=0$ for all $x\in\Omega$. By uniqueness of the Cauchy--Neumann problem, $V$ would be identically $0$ for all time, which contradicts the fact that $V_{i,0}\not\equiv0$. Therefore, for all $x\in\Omega$,
 \[
  I(x,\delta)=H(x)\left(1-\exp\left(-\beta_{VH}\displaystyle\int_0^{\delta}V_i(x,s)\mbox{d}s\right)\right)>0.
 \]
 
 By the Hopf boundary lemma and the homogeneous Neumann boundary conditions, for all $t\in(0,\delta)$, $\underset{x\in\Omega}{\inf}\;V_i(x,t)>0$. Then, for all $x\in\Omega$, $\displaystyle\int_0^{\delta}V_i(x,s)\mbox{d}s\geq\displaystyle\int_0^{\delta}\underset{x\in\Omega}{\inf}\;V_i(x,s)\mbox{d}s$, which yields $\underset{x\in\Omega}{\inf}\displaystyle\int_0^{\delta}V_i(x,s)\mbox{d}s>0$. By positivity of $H$ on the compact $\overline{\Omega}$, $\underset{x\in\Omega}{\inf}\;I(x,\delta)>0$. By increasingness of $t\mapsto I(x,t)$,
 \[
  \underset{t\to+\infty}{\liminf}\;\underset{x\in\Omega}{\inf}\;I(x,t)\geq\underset{x\in\Omega}{\inf}\;I(x,\delta)>0.
 \]
 
 Let $t_{\varepsilon}>0$ such that, for all $x\in\Omega$ and $t\geq t_{\varepsilon}$, $V(x,t)\leq\varepsilon$ and $|P(x,t)-r_P/s_P|\leq\varepsilon$. Denote $\underline{V_i}$, the solution of
 \begin{equation*}
     \begin{cases}
      \underline{V_i}(x,t_{\varepsilon})=V_{i,t_{\varepsilon}} & \mbox{ in }\Omega\\
      \partial_t\underline{V_i}-\sigma_V\Delta\underline{V_i}=(-\alpha-d_V-s_V\varepsilon-h(r_P/s_P+\varepsilon))\underline{V_i} & \mbox{ in }\Omega\times[t_{\varepsilon},+\infty)\\
      \dfrac{\partial\underline{V_i}}{\partial n}=0 & \mbox{ on }\partial\Omega\times[t_{\varepsilon},+\infty)
     \end{cases}
 \end{equation*}
 
 Denote $\varphi_{i,1}$ the principal eigenfunction associated to $\lambda_1(\mathcal{L}_{V_i})$ (which is positive, by the Krein-Rutman theorem), with $\underset{x\in\Omega}{\max}\varphi_{i,1}(x)=1$. By our choice of $t_{\varepsilon}$, $\underline{V_i}$ is a subsolution of the Cauchy--Neumann scalar system verified by $V_i$, and for all $x\in\Omega$, $t\geq t_{\varepsilon}$,
 \[
 V_i(x,t)\geq\underset{x\in\Omega}{\min}V_i(x,t_{\varepsilon})\underset{\Omega}{\min}\varphi_{i,1}\exp\left(-(\lambda_1(\mathcal{L}_{V_i})+s_V\varepsilon+h\varepsilon)(t-t_{\varepsilon})\right).
 \]
 
 By the supersolution of $V_i$ we have exibited before, for all $x\in\Omega,\ t\geq t_{\varepsilon}$,
 \[
 \begin{array}{ll}
    & I(x,t)/H(x)\\
    = & 1-\exp\left(-\beta_{VH}\displaystyle\int_0^tV_i(x,s)\mbox{d}s\right)\\
    \leq & 1-\exp\left(-\beta_{VH}\left(\displaystyle\int_0^{t_{\varepsilon}}V_i(x,s)\mbox{d}s+\underset{x\in\Omega}{\max}V(x,t_{\varepsilon})\underset{\Omega}{\max}\varphi_{s,1}\displaystyle\int_{t_{\varepsilon}}^te^{-(\lambda_1(\mathcal{L}_{V_s})-h\varepsilon)(s-t_{\varepsilon})}\mbox{d}s\right)\right)\\
    \leq & 1-\exp\left(-\beta_{VH}\left(\displaystyle\int_0^{t_{\varepsilon}}V_i(x,s)\mbox{d}s+\dfrac{\underset{x\in\Omega}{\max}V(x,t_{\varepsilon})\underset{\Omega}{\max}\varphi_{s,1}}{\lambda_1(\mathcal{L}_{V_s})-h\varepsilon}\right)\right).
 \end{array}
 \]

 By the subsolution of $V_i$ we have exibited before, for all $x\in\Omega,\ t\geq t_{\varepsilon}$,
 \[
 \begin{array}{ll}
    & I(x,t)/H(x)\\
    \geq & 1-\exp\left(-\beta_{VH}\left(\displaystyle\int_0^{t_{\varepsilon}}V_i(x,s)\mbox{d}s+\underset{x\in\Omega}{\min}V_i(x,t_{\varepsilon})\underset{\Omega}{\min}\varphi_{i,1}\displaystyle\int_{t_{\varepsilon}}^te^{-(\lambda_1(\mathcal{L}_{V_i})+s_V\varepsilon+h\varepsilon)(s-t_{\varepsilon})}\mbox{d}s\right)\right)\\
    = & 1-\exp\left(-\beta_{VH}\left(\displaystyle\int_0^{t_{\varepsilon}}V_i(x,s)\mbox{d}s+\underset{x\in\Omega}{\min}V_i(x,t_{\varepsilon})\underset{\Omega}{\min}\varphi_{i,1}\dfrac{1-e^{-(\lambda_1(\mathcal{L}_{V_i})+s_V\varepsilon+h\varepsilon)(t-t_{\varepsilon})}}{\lambda_1(\mathcal{L}_{V_i})+s_V\varepsilon+h\varepsilon}\right)\right).
 \end{array}
 \]
 
 If $|P_0-r_P/s_P|\leq\varepsilon$ and $V_0\leq\varepsilon$, then $t_{\varepsilon}=0$ and the proof is completed.
 
 $(ii)$ Let us first prove that
 \[
  \underset{t\to+\infty}{\liminf}\;\underset{x\in\Omega}{\inf}\;V(x,t)>0.
 \]
 
 We are going to need the following lemma:
 
 \begin{lemma}
  Let $\varepsilon>0$, and let $[t_0,T)$ be any interval of time where $V$ is lower than or equal to $\varepsilon$. There exists $S_{\varepsilon}>0$ such that $T\leq S_{\varepsilon}$.
 \end{lemma}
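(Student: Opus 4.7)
The plan is to exploit $\lambda_1(\mathcal{L}_{V_s}) < 0$ by producing an exponentially growing subsolution for $V = V_i + V_s$ which, because it must stay below $\varepsilon$ on $[t_0, T)$, forces the interval to be short.

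First I would transfer the smallness of $V$ into a pointwise control of $P$ near the predator-only equilibrium. On $[t_0, T)$, where $0 \leq V \leq \varepsilon$, the equation
\begin{equation*}
\partial_t P - \sigma_P \overline{\mathcal{L}}(P) = r_P P - s_P P^2 + \gamma h V P
\end{equation*}
is sandwiched between the two autonomous logistic problems with reactions $r_P P - s_P P^2$ and $(r_P + \gamma h \varepsilon)P - s_P P^2$. By Proposition \ref{unicityofPstar} combined with \cite[Proposition 3.2]{cantrellcosner2004}, the solutions of these problems converge uniformly to $r_P/s_P$ and $(r_P + \gamma h \varepsilon)/s_P$ respectively. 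So there is $\tau_\varepsilon > 0$ such that $\|P(\cdot,t) - r_P/s_P\|_\infty \leq C\varepsilon$ on $[t_0 + \tau_\varepsilon, T)$.

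Next, plugging this into the $V$-equation and using $V \leq \varepsilon$ yields, on $[t_0 + \tau_\varepsilon, T)$,
\begin{equation*}
\partial_t V - \sigma_V \Delta V = (r_V - s_V V - h P)V \geq \bigl(r_V - h\,r_P/s_P - C'\varepsilon\bigr) V.
\end{equation*}
I would then compare $V$ with $w(x,t) := c\, \varphi_{s,1}(x)\,e^{\mu(t - t_0 - \tau_\varepsilon)}$ where $\mu := -\lambda_1(\mathcal{L}_{V_s}) - C'\varepsilon$. A direct computation using $\mathcal{L}_{V_s}\varphi_{s,1} = \lambda_1(\mathcal{L}_{V_s})\varphi_{s,1}$ shows that $w$ is an exact solution of the linearized equation with Neumann boundary conditions, and $\mu > 0$ for $\varepsilon$ small enough since $\lambda_1(\mathcal{L}_{V_s}) < 0$. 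Picking $c > 0$ so that $V(\cdot, t_0 + \tau_\varepsilon) \geq c\,\varphi_{s,1}$ --- possible because $V_{i,0}\not\equiv 0$ and the Hopf boundary lemma (as already invoked in case $(i)$) ensure $V$ is uniformly positive on $\overline{\Omega}$ at any positive time --- the parabolic comparison principle gives $V \geq w$ on $[t_0 + \tau_\varepsilon, T)$. Since $\min_\Omega \varphi_{s,1} = 1$, the bound $\min_\Omega V(\cdot, t) \geq c\,e^{\mu(t - t_0 - \tau_\varepsilon)}$ must cross $\varepsilon$ in finite time, yielding $T \leq t_0 + \tau_\varepsilon + \mu^{-1}\log(\varepsilon/c)$.

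The main obstacle is that this argument a priori produces $\tau_\varepsilon$ and $c$ depending on the state at $t_0$, hence on $t_0$ itself; if $V$ can become arbitrarily small before re-entering the regime $V \leq \varepsilon$ then $c$ degenerates and $\mu^{-1}\log(\varepsilon/c)$ blows up. To get a $t_0$-independent $S_\varepsilon$ I would promote both bounds. For $\tau_\varepsilon$, the pure-logistic subsolution of the $P$-equation started at $t = 0$ itself converges to $r_P/s_P$ uniformly regardless of $V$, providing a data-dependent but $t_0$-independent time after which $P(\cdot, t_0) \geq r_P/s_P - \varepsilon/2$. For $c$, Huska's uniform-in-time parabolic Harnack inequality \cite[Theorem 2.5]{huska}, applied to the $V$-equation on slabs of fixed length, controls $\min_\Omega V$ by a multiple of $\max_\Omega V$ with a time-independent constant; combined with the parabolic strong maximum principle (which rules out $V\equiv 0$ in view of $V_{i,0}\not\equiv 0$), this delivers a $t_0$-uniform $c$ and hence the sought-after $S_\varepsilon$.
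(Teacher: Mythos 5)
Your strategy is the same as the paper's: use $V\leq\varepsilon$ on the interval to drive $P$ below a small perturbation of $r_P/s_P$ after a controlled transition time, then observe that $V$ satisfies a linear differential inequality whose principal eigenvalue is a small perturbation of $\lambda_1(\mathcal{L}_{V_s})<0$, and compare with an exponentially growing eigenmode until it crosses $\varepsilon$. Two points of bookkeeping are looser than in the paper, though. First, $(r_P+\gamma h\varepsilon)/s_P$ is \emph{not} an equilibrium of the perturbed equation $\partial_tP-\sigma_P\overline{\mathcal{L}}(P)=(r_P+\gamma h\varepsilon)P-s_PP^2$, since $\overline{\mathcal{L}}$ only annihilates multiples of $r_P$; the paper sidesteps this by passing to $\tilde P=P/r_P$ and using a spatially constant ODE supersolution $\overline{P}(t)$, whose convergence time to $K_\varepsilon+\varepsilon$ depends only on an upper bound for $P(\cdot,t_0)/r_P$ — which is uniform in $t_0$ by the global bound on $P$ (note also that for the growing subsolution only the \emph{upper} bound on $P$ matters, so your appeal to the lower logistic barrier is not the relevant uniformity). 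Second, and more substantively, your claim that Harnack plus the strong maximum principle yields a $t_0$-uniform $c$ at time $t_0+\tau_\varepsilon$ does not quite close the gap you correctly identified: the strong maximum principle gives no quantitative lower bound on $\max_\Omega V(\cdot,t_0+\tau_\varepsilon)$. What is needed (and what the paper does) is the crude decay estimate $\partial_tV-\sigma_V\Delta V\geq -h\overline{\mathcal{P}}V$ on the transition window, which shows $V$ can lose at most the fixed factor $e^{-h\overline{\mathcal{P}}\tau_\varepsilon}$ between $t_0$ and $t_0+\tau_\varepsilon$; combined with the Harnack bound $\min_\Omega V(\cdot,t_0)\geq C\max_\Omega V(\cdot,t_0)$ and the fact that, in the application, each such interval opens with $\max_\Omega V(\cdot,t_0)=\varepsilon$, this produces the uniform $c$ and hence a uniform $S_\varepsilon$. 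With these two repairs your argument coincides with the paper's proof.
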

 
 \begin{proof}
 Let $\varepsilon>0$ be fixed. During the course of the proof, we will make a smallness assumption on its value that could have been made from the start without loss of generality. Let $[t_0,T)$ be an interval of time where $V(x,t)\leq\varepsilon$ for all $x\in\Omega$. We are going to prove that $T\leq S_{\varepsilon}$, and we are going to give an explicit formula for $S_{\varepsilon}$. Let us suppose, by contradiction, that $T=+\infty$.
 
 We can consider $\overline{P}$, the solution of the Cauchy scalar problem
 \begin{equation*}
     \begin{cases}
      \overline{P}(t_0)=m_{P_0}:=\underset{x\in\Omega}{\max}\left(\dfrac{P_{t_0}}{r_P}\right)\\
      \partial_t \overline{P}=\underset{x\in\Omega}{\max}(r_P)(\overline{P}-s_P\overline{P}^2)+\gamma h\varepsilon\overline{P} & \mbox{ in }[t_0,T).
     \end{cases}
 \end{equation*}
 
 One easily checks that $\overline{P}$ is a supersolution of
 \begin{equation*}
     \begin{cases}
      \tilde{P}(.,t_0)=\dfrac{P_{t_0}}{r_P} & \mbox{ in }\Omega\\
      \partial_t\tilde{P}-\dfrac{\sigma_P}{r_P}\nabla\cdot\left(r_P\nabla(\tilde{P})\right)=r_P(\tilde{P}-s_P\tilde{P}^2)+\gamma hV\tilde{P} & \mbox{ in }\Omega\times[t_0,T)\\
      \dfrac{\partial\tilde{P}}{\partial n}=0 & \mbox{ on }\partial\Omega\times[t_0,T).
     \end{cases}
 \end{equation*}

 Hence, $P=r_P\tilde{P}\leq r_P\overline{P}$ on this interval. Moreover, one can compute explicitly
 \[
  \overline{P}(t)=K_{\varepsilon}\dfrac{1}{1+\left(K_{\varepsilon}/m_{P_0}-1\right)e^{-(\underset{x\in\Omega}{\max}(r_P)+\gamma h\varepsilon)(t-t_0)}},
 \]
 which converges to $K_{\varepsilon}:=\dfrac{\underset{x\in\Omega}{\max}(r_P)+\gamma h\varepsilon}{\underset{x\in\Omega}{\max}(r_P)s_P}$. Hence, there exists $t_1\geq t_0$ such that
 \[
  \overline{P}(t)\leq K_{\varepsilon}+\varepsilon
 \]
 for all $t\geq t_1$. Assume $t_1$ is minimal.
 
 If $m_{P_0}s_P\leq1$, then $\overline{P}$ is increasing in time and $\overline{P}(t)\leq K_{\varepsilon}$ for all $t\geq t_0$, hence $t_1=t_0$.
 
 If $m_{P_0}s_P>1$, then $\overline{P}$ is decreasing in time, $\overline{P}(t)\leq m_{P_0}$ for all $t\geq t_0$, and $t_1$ is given by
 \[
  \overline{P}(t_1)=K_{\varepsilon}+\varepsilon,
 \]
 hence
 \[
  e^{-(\underset{x\in\Omega}{\max}(r_P)+\gamma h\varepsilon)(t-t_0)}=\dfrac{1}{(1+K_{\varepsilon}/\varepsilon)\left(1-K_{\varepsilon}/m_{P_0}\right)},
 \]
 and
 \[
  t_1=t_0+\dfrac{\ln\left(1+K_{\varepsilon}/\varepsilon\right)+\ln\left(1-K_{\varepsilon}/m_{P_0}\right)}{\underset{x\in\Omega}{\max}(r_P)+\gamma h\varepsilon}.
 \]
 
 Denote $\overline{\mathcal{P}}:=\underset{x\in\Omega}{\max}(r_P)\max\left(K_{\varepsilon},m_{P_0}\right)$, so that, for all $x\in\Omega$ and $t\geq t_0$, $P(x,t)\leq\overline{\mathcal{P}}$. We can construct a subsolution $\underline{U}$ of the Cauchy--Neumann scalar problem verified by $V$ on $[t_0,T)$:
 \begin{equation*}
     \begin{cases}
      \underline{U}(t_0)=\min\left(\underset{x\in\Omega}{\min}V_{t_0},\dfrac{\underset{x\in\Omega}{\min}r_V}{s_V}\right)\\
      \partial_t \underline{U}=-h\overline{\mathcal{P}}\underline{U} & \mbox{ in }[t_0,T),
     \end{cases}
 \end{equation*}
 which gives us, for all $x\in\Omega$, the following inequality:
  \[
  V(x,t_1)\geq C:=\min\left(\underset{x\in\Omega}{\min}V_{t_0},\dfrac{\underset{x\in\Omega}{\min}r_V}{s_V}\right)e^{-h\overline{\mathcal{P}}(t_1-t_0)}.
 \]

 Now, let $\underline{V}$ be the solution of
 \begin{equation*}
     \begin{cases}
      \underline{V}(\cdot,t_1)=V_{t_1} & \mbox{ in }\Omega\\
      \partial_t \underline{V}-\sigma_V\Delta\underline{V}=r_V\underline{V}-2s_V\varepsilon\underline{V}-hr_P\left(K_{\varepsilon}+\varepsilon\right)\underline{V} & \mbox{ in }\Omega\times[t_1,T)\\
      \dfrac{\partial\underline{V}}{\partial n}=0 & \mbox{ on }\partial\Omega\times[t_1,T).
     \end{cases}
 \end{equation*}
 
 By construction of $t_1$, $\underline{V}$ is a subsolution of the Cauchy--Neumann scalar problem verified by $V$ on this interval, as long as $\underline{V}\leq2\varepsilon$. Say that $\underline{V}\leq2\varepsilon$ in an interval $[t_1,t^*]$ ($t^*>t_1$ exists by the assumption $V\leq\varepsilon$ and by continuity of $\underline{V}$). By negativity of $\lambda_1(\mathcal{L}_{V_s})$ and by continuity of the principal eigenvalue with respect to the zeroth order term, one can take $\varepsilon$ small enough such that
 \[
  \lambda_1^{\varepsilon}:=\lambda_1\left(-\sigma_V\Delta-r_V+2s_V\varepsilon+hr_P\left(K_{\varepsilon}+\varepsilon\right)\right)<0
 \]
 (recall that $K_{\varepsilon}\to1/s_P$ as $\varepsilon\to0$).
 
 Even if it means reducing it, take $\varepsilon$ satisfying this condition.
 
 The subsolution $\underline{V}$ is above $C\varphi_1^{\varepsilon}(x)e^{-\lambda_1^{\varepsilon}(t-t_1)}$ for all $x\in\Omega$ and $t\geq t_1$, $\varphi_1^{\varepsilon}$ being the principal eigenvalue associated to $\lambda_1^{\varepsilon}$ (which is positive, by the Krein-Rutman theorem) with $\underset{x\in\Omega}{\max}\;\varphi_1^{\varepsilon}=1$. Therefore, for all $x\in\Omega$ and $t\in [t_1,t^*]$,
 \[
 \begin{array}{ll}
  V(x,t) & \geq\underline{V}(x,t)\\
  & \geq C\varphi_1^{\varepsilon}(x)e^{-\lambda_1^{\varepsilon}(t-t_1)}\\
  & \geq \min\left(\underset{x\in\Omega}{\min}V_{t_0},\dfrac{\underset{x\in\Omega}{\min}r_V}{s_V}\right)e^{-h\overline{\mathcal{P}}(t_1-t_0)}\underset{\Omega}{\min}\varphi_1^{\varepsilon}e^{-\lambda_1^{\varepsilon}(t-t_1)},
 \end{array}
 \]
 hence $\underline{V}$ is above an increasing function until the finite time $t^*$ where $\underline{V}(x,t^*)=2\varepsilon$ for all $x\in\Omega$. According to the previous inequality, one has, for all $x\in\Omega$, $V(x,t^*)\geq2\varepsilon>\varepsilon$, which contradicts the assumption on $T$. $T$ is hereby finite, and we can compute $S_{\varepsilon}$:
 \[
  \min\left(\underset{x\in\Omega}{\min}V_{t_0},\dfrac{\underset{x\in\Omega}{\min}r_V}{s_V}\right)e^{-h\overline{\mathcal{P}}(t_1-t_0)}\underset{\Omega}{\min}\varphi_1^{\varepsilon}e^{-\lambda_1^{\varepsilon}(t_0+S_{\varepsilon}-t_1)}=\varepsilon,
 \]
 hence
 \[
  S_{\varepsilon}=\dfrac{\ln\left(\min\left(\underset{x\in\Omega}{\min}V_{t_0},\dfrac{\underset{x\in\Omega}{\min}r_V}{s_V}\right)e^{-h\overline{\mathcal{P}}(t_1-t_0)}\underset{\Omega}{\min}\varphi_1^{\varepsilon}\right)-\ln(\varepsilon)}{\lambda_1^{\varepsilon}}+t_1-t_0,
 \]
 with the value of $t_1-t_0$ given above.
 \end{proof}
 
 First of all, let us prove that, for any $\delta>0$, $\underset{x\in\Omega}{\min}V(x,\delta)>0$. Let $\delta>0$. Suppose by contradiction that $\underset{x\in\Omega}{\min}V(x,\delta)=0$. By the uniform Harnack inequality (see \cite[Theorem 2.5]{huska}), there exists $C_{\delta}>0$ such that $\underset{x\in\Omega}{\max}V(x,\delta)\leq C_{\delta}\underset{x\in\Omega}{\min}V(x,\delta)=0$ for all $x\in\Omega$, therefore we would have $V(x,\delta)=0$ for all $x\in\Omega$. By unicity of the Cauchy--Neumann problem, $V$ would be identically $0$ for all time, which contradicts the fact that $V_0\not\equiv0$.
 
 Let $\varepsilon>0$. We are going to consider two cases:
 
 \textbf{Case 1:} there exists $t^*>0$ such that, for all $t\geq t^*$, $\underset{x\in\Omega}{\max}V(x,t)\geq\varepsilon$. By the uniform Harnack inequality (see \cite[Theorem 2.5]{huska}), there exists $C_{t^*}>0$ so that, for all $t\geq t^*$, $\underset{x\in\Omega}{\min}V(x,t)\geq C_{t^*}\underset{x\in\Omega}{\max}V(x,t)\geq C_{t^*}\varepsilon$. The proof is then completed.
 
 \textbf{Case 2:} for all $t^*>0$, there exists $t\geq t^*$ such that $\underset{x\in\Omega}{\max}V(x,t)<\varepsilon$. By the lemma, we can deduce the existence of an increasing sequence of times $(t_n)_{n\in\mathbb{N}}$, $t_0>0$, $t_n\underset{n\to+\infty}{\longrightarrow}+\infty$, where, for all $n\in\mathbb{N}$,
 \[
  \begin{cases}
   0<\underset{x\in\Omega}{\max}V(x,t)\leq\varepsilon & \forall t\in[t_{2n},t_{2n+1}]\\
   \underset{x\in\Omega}{\max}V(x,t)\geq\varepsilon & \forall t\in[t_{2n+1},t_{2n+2}].
  \end{cases}
 \]
 
 By the uniform Harnack inequality (see \cite[Theorem 2.5]{huska}), there exists $C_{t_0}>0$ so that, for all $t\geq t_0$, $\underset{x\in\Omega}{\min}V(x,t)\geq C_{t_0}\underset{x\in\Omega}{\max}V(x,t)$.
 
 Hence, for all $n\in\mathbb{N}$ and all $t\in[t_{2n+1},t_{2n+2}]$, $\underset{x\in\Omega}{\min}V(x,t)\geq C_{t_0}\underset{x\in\Omega}{\max}V(x,t)\geq C_{t_0}\varepsilon$.

 Again by the lemma, we know that, for all $n\in\mathbb{N}$, $|t_{2n+1}-t_{2n}|\leq S_{\varepsilon}$. Therefore, for all $n\in\mathbb{N}$ and all $t\in[t_{2n},t_{2n+1}]$,
 \[
   \underset{x\in\Omega}{\min}V(x,t)\geq\underset{t\in[t_{2n},t_{2n}+S_{\varepsilon}]}{\min}\underset{x\in\Omega}{\min}V(x,t)
 \]
 
 Denote $\overline{\mathcal{P}}:=\underset{x\in\Omega}{\max}(r_P)\max\left(\dfrac{\underset{x\in\Omega}{\max}(r_P)+\gamma h\varepsilon}{\underset{x\in\Omega}{\max}(r_P)s_P},\underset{x\in\Omega}{\max}\left(\dfrac{P_{t_0}}{r_P}\right)\right)$. Without detailing more (see the proof of the lemma), we have for all $x\in\Omega$ and $t\geq t_0$, $P(x,t)\leq\overline{\mathcal{P}}$. We can construct, for all $n\in\mathbb{N}$, a subsolution $\underline{U}$ of the Cauchy--Neumann scalar problem verified by $V$ on $[t_{2n},t_{2n}+S_{\varepsilon}]$:
 \[
 \begin{cases}
 \underline{U}(t_0)=C_{t_0}\varepsilon\\
 \partial_t \underline{U}=-h\overline{\mathcal{P}}\underline{U} & \mbox{ in }[t_{2n},t_{2n}+S_{\varepsilon}],
 \end{cases}
 \]
 which yields, for all $n\in\mathbb{N}$ and all $t\in[t_{2n},t_{2n}+S_{\varepsilon}]$,
 \[
  \underset{x\in\Omega}{\min}V(x,t)\geq C_{t_0}\varepsilon e^{-h\overline{\mathcal{P}}S_{\varepsilon}},
 \]
 hence
 \[
  \underset{t\in[t_{2n},t_{2n}+S_{\varepsilon}]}{\min}\underset{x\in\Omega}{\min}V(x,t)\geq C_{t_0}\varepsilon e^{-h\overline{\mathcal{P}}S_{\varepsilon}}.
 \]
 
 Finally, combining the inequalities on all segments, for all $t\geq t_0$,
 \[
  \underset{x\in\Omega}{\min}V(x,t)\geq\underline{\mathcal{V}}:=\min\left(C_{t_0}\varepsilon,C_{t_0}\varepsilon e^{-h\overline{\mathcal{P}}S_{\varepsilon}}\right),
 \]
 which proves that
 \[
  \underset{t\to+\infty}{\liminf}\;\underset{x\in\Omega}{\inf}\;V(x,t)>0.
 \]
 
 $V_i$ satisfies
 \begin{equation*}
     \begin{cases}
      V_i(\cdot,0)=V_{i,0}\not\equiv 0 & \mbox{ in }\Omega\\
      \partial_t V_i-\sigma_V\Delta V_i=\beta_{HV}IV_s-\alpha V_i-d_VV_i-s_V(V_s+V_i)V_i-hPV_i & \mbox{ in }\Omega\times\mathbb{R}_+^*\\
      \dfrac{\partial V_i}{\partial n}=0 & \mbox{ on }\partial\Omega\times\mathbb{R}_+^*.
     \end{cases}
 \end{equation*}
 
 By classical arguments presented before, for all $\delta>0$, $t\geq\delta$ and $x\in\Omega$, $V_i(x,t)>0$. In particular, $\underset{x\in\Omega}{\min}V_i(x,t_0)>0$.
 
 The equation on $V_i$ can be rewritten as
 \[
  \begin{array}{ll}
    \partial_t V_i-\sigma_V\Delta V_i & =\beta_{HV}I(V-V_i)-\alpha V_i-d_VV_i-s_VVV_i-hPV_i\\
    & =\beta_{HV}H\left(1-\exp\left(-\beta_{VH}\displaystyle\int_0^tV_i\right)\right)V-(\beta_{HV}I+\alpha+d_V+s_VV+hP)V_i.
  \end{array}
 \]
 
 $I$, $V$ and $P$ being bounded, there exists a constant $C>0$ large enough so that $\beta_{HV}I+\alpha+d_V+s_VV+hP\leq C$. Define $\underline{V_i}$, the solution of
 \begin{equation*}
     \begin{cases}
      \underline{V_i}(t_0)=\underset{x\in\Omega}{\min}V_i(x,t_0)\\
      \partial_t\underline{V_i}=\beta_{HV}\underset{x\in\Omega}{\min}H(x)\left(1-\exp\left(-\beta_{VH}\displaystyle\int_0^{t_0}\underset{x\in\Omega}{\min}V_i(x,s)\mbox{d}s\right)\right)\underline{\mathcal{V}}-C\underline{V_i} & \mbox{ in }[t_0,+\infty),
     \end{cases}
 \end{equation*}
 
 One easily checks that $\underline{V_i}$ is a subsolution of the Cauchy--Neumann scalar system verified by $V_i$, and has
 \[
  \underline{V_i}(t)=a+(\underset{x\in\Omega}{\min}V_i(x,t_0)-a)e^{-C(t-t_0)},
 \]
 where
 \[
 a=\dfrac{1}{C}\beta_{HV}\underset{x\in\Omega}{\min}H(x)\left(1-\exp\left(-\beta_{VH}\displaystyle\int_0^{t_0}\underset{x\in\Omega}{\min}V_i(x,s)\mbox{d}s\right)\right)\underline{\mathcal{V}}>0,
 \]
 hence
 \[
  \underset{t\to+\infty}{\liminf}\;\underset{x\in\Omega}{\inf}\;V_i(x,t)\geq\underset{t\to+\infty}{\lim}\;\underline{V_i}(t)=a>0.
 \]
 
 Moreover, for all $x\in\Omega$ and $t\geq t_0$,
 \[
 \begin{array}{ll}
    |I(x,t)-H(x)| & =H(x)\exp\left(-\beta_{VH}\displaystyle\int_0^tV_i(x,s)\mbox{d}s\right)\\
    & \leq H(x)\exp\left(-\beta_{VH}\left(\displaystyle\int_0^{t_0}V_i(x,s)\mbox{d}s+\displaystyle\int_{t_0}^ta+\displaystyle\int_{t_0}^t(\underset{x\in\Omega}{\min}V_i(x,t_0)-a)e^{-C(s-t_0)}\mbox{d}s\right)\right)\\
    & =H(x)\exp\left(-\beta_{VH}\left(\displaystyle\int_0^{t_0}V_i(x,s)\mbox{d}s+(t-t_0)a+(\underset{x\in\Omega}{\min}V_i(x,t_0)-a)\dfrac{1-e^{-C(t-t_0)}}{C}\right)\right).
 \end{array}
 \]
 
 By boundedness of $H$ and $V_i$, $H\exp\left(-\beta_{VH}\left(\displaystyle\int_0^{t_0}V_i(\cdot,s)\mbox{d}s\right)\right)$ is bounded, say by a constant $C'>0$. Therefore, for all $t\geq t_0$,
 \[
 \begin{array}{ll}
    \Vert I(\cdot,t)-H\Vert_{\infty} & \leq C'\exp\left(-\beta_{VH}\left((t-t_0)a+(\underset{x\in\Omega}{\min}V_i(x,t_0)-a)\dfrac{1-e^{-C(t-t_0)}}{C}\right)\right)\\
    & \underset{t\to+\infty}{\longrightarrow}0.
 \end{array}
 \]
\end{proof}

\section{Estimations of the harvest}

Assume that $\lambda_1(\mathcal{L}_{V_s})>0$ and fix $\varepsilon>0$ such that $\lambda_1(\mathcal{L}_{V_s})-h\varepsilon>0$. By theorem \ref{largetimebehaviour}, there exists $t_{\varepsilon}>0$ such that, for all $x\in\Omega$, $t\geq t_{\varepsilon}$,
\[
 \begin{array}{ll}
    & H(x)\left(1-\exp\left(-\beta_{VH}\left(\displaystyle\int_0^{t_{\varepsilon}}V_i(x,s)\mbox{d}s+\underset{x\in\Omega}{\min}V_{i}(x,t_{\varepsilon})\underset{\Omega}{\min}\varphi_{i,1}\dfrac{1-e^{-(\lambda_1(\mathcal{L}_{V_i})+s_V\varepsilon+h\varepsilon)(t-t_{\varepsilon})}}{\lambda_1(\mathcal{L}_{V_i})+s_V\varepsilon+h\varepsilon}\right)\right)\right)\\
    \leq & I(x,t)\\
    \leq & H(x)\left(1-\exp\left(-\beta_{VH}\left(\displaystyle\int_0^{t_{\varepsilon}}V_i(x,s)\mbox{d}s+\dfrac{\underset{x\in\Omega}{\max}V(x,t_{\varepsilon})\underset{\Omega}{\max}\varphi_{s,1}}{\lambda_1(\mathcal{L}_{V_s})-h\varepsilon}\right)\right)\right),
 \end{array}
\]
hence, letting $t\to+\infty$ and integrating over $\Omega$,
\[
 \begin{array}{ll}
    & \displaystyle\int_{\Omega}H\exp\left(-\beta_{VH}\left(\displaystyle\int_0^{t_{\varepsilon}}V_i(\cdot,s)\mbox{d}s+\dfrac{\underset{x\in\Omega}{\max}V(x,t_{\varepsilon})\underset{\Omega}{\max}\varphi_{s,1}}{\lambda_1(\mathcal{L}_{V_s})-h\varepsilon}\right)\right)\\
    \leq & \displaystyle\int_{\Omega}H-I_\infty\\
    \leq & \displaystyle\int_{\Omega}H\exp\left(-\beta_{VH}\left(\displaystyle\int_0^{t_{\varepsilon}}V_i(\cdot,s)\mbox{d}s+\dfrac{\underset{x\in\Omega}{\min}V_{i}(x,t_{\varepsilon})\underset{\Omega}{\min}\varphi_{i,1}}{\lambda_1(\mathcal{L}_{V_i})+s_V\varepsilon+h\varepsilon}\right)\right).
 \end{array}
\]

If $|P_0-r_P/s_P|\leq\varepsilon$ and $V_0\leq\varepsilon$, then $t_{\varepsilon}=0$ and the estimates follow.

If $\lambda_1(\mathcal{L}_{V_s})<0$, by Theorem \ref{largetimebehaviour}, $I_\infty=H$, hence $\displaystyle\int_{\Omega}H-I_\infty=0$.

\section{Numerical study of the optimal control problem}\label{spatialhomogenization}

Recall that the optimal control problem is enunciated as follows: assuming that $r_V=r_V^{\text{field}}+r_V^{\text{refuge}}R$, $r_P=r_P^{\text{field}}+r_P^{\text{refuge}}R$, $H=H^{\text{field}}(1-R)$ with $R:\Omega\to[0,1]$ a spatial distribution of biodiversity refuges and with positive constants $r_V^{\text{field}}$, $r_V^{\text{refuge}}$, $r_P^{\text{field}}$, $r_P^{\text{refuge}}$, $H^{\text{field}}$, characterize the set of optimal $R$ such that the harvest of healthy beets $\lim_{t\to+\infty}\int_{\Omega} H(x)-I(x,t)\textup{d}x$ is maximized.

Although we worked in the previous analytical sections in a smooth setting (smooth domain, smooth functions $r_V$, $r_P$ and $H$, classical solutions) for the sake of simplicity, we actually have in mind, application-wise, a slightly less regular setting, where $\Omega\subset\mathbb{R}^2$ is a rectangle $[0,l_1]\times[0,l_2]$, where $R$ is an indicator function and where solutions are weak solutions. The extension of the preceding analytical sections to this setting by regularization procedures is a classical but technical issue and is deliberately not written in this paper. Our numerical simulations below will illustrate, quite unambiguously, that the analytical study of the optimal control problem with purely deterministic tools is likely vain, independently of the spatial dimension and the smoothness of the setting: indeed, the main issue lies in the strong dependency on the initial conditions. Therefore, in this section, we only present numerical simulations of the biologically more realistic setting.

\subsection{First insights}

Theorem \ref{largetimebehaviour} established the importance of the principal eigenvalue $\lambda_1(\mathcal{L}_{V_s})$. For the agro-ecological control strategy to succeed, this eigenvalue must be larger than $0$. Moreover, when this sign condition is satisfied, the rate of exponential decay of the vector population is, roughly speaking, $-\lambda_1(\mathcal{L}_{V_s})$: the larger the eigenvalue, the faster the eradication of vectors.
Therefore it is natural to consider the maximization of this eigenvalue as the spatial distribution of refuges $R$ varies. Since $R$ is in all cases an indicator function $\mathbf{1}_A$, with $A$ the refuge subdomain, the problem reduces to maximizing the eigenvalue as $A$ varies. Also, thanks to the ideal free distribution of predators which makes their equilibrium equal to $r_P/s_P$, the principal eigenvalue $\lambda_1(\mathcal{L}_{V_s})=\lambda_1(A)$ depends on $A$ in a completely explicit way:
\begin{align*}
\lambda_1(A) & =\lambda_1\left(-\sigma_V\Delta-\left(r_V^{\text{field}}-\frac{h}{s_P}r_V^{\text{field}}\right)-\left(r_V^{\text{refuge}}-\frac{h}{s_P}r_V^{\text{refuge}}\right)\mathbf{1}_A\right) \\
& = -\left(r_V^{\text{field}}-\frac{h}{s_P}r_V^{\text{field}}\right)+\lambda_1\left(-\sigma_V\Delta-\left(r_V^{\text{refuge}}-\frac{h}{s_P}r_V^{\text{refuge}}\right)\mathbf{1}_A\right) .
\end{align*}

The first remark to be made on this spectral optimization concerns the effect of fragmentation. Is a unique big connected refuge better than a multitude of small disconnected uniformly distributed refuges? The answer is negative: the eigenvalue $\lambda_1(A)$ is increasing with the frequency of the refuge $A$ (in dimension $1$, the frequency is the number of connected components, we will provide a preciser definition of the frequency later). This can be easily proved by periodically extending $A$ in $\mathbb{R}^2$ (recall that $\Omega$ is now a rectangle), by a change of spatial variable and by using the variational formula for $\lambda_1(A)$: calling $\mu=-r_V^{\text{refuge}}+\frac{h}{s_P}r_V^{\text{refuge}}$, and up to an additive constant, the principal eigenvalue corresponding to the refuges $A_n$ of frequency $n$ we denote $\lambda_1(A_n)$ is
\[
 \lambda_1(A_n)=\inf_{\varphi\in H^1(\Omega),\Vert\varphi\Vert_{L^2}=1}\displaystyle\int_{\Omega}n^2\sigma_V|\nabla\varphi|^2+\mu\mathbf{1}_{A_1}\varphi^2.
\]

The monotonicity of $\lambda_1(A_n)$ with respect to the frequency $n$ is hereby clear.

Moreover, the limit as the frequency becomes infinite while the area is fixed is identified thanks to a well-known homogenization result (Theorem 2.1 of \cite{chenlou}): indeed it is
\[
-r_V^{\text{field}}+\frac{hr_P^{\text{field}}}{s_P}+\frac{|A|}{|\Omega|}\left(-r_V^{\text{refuge}}+\frac{hr_P^{\text{refuge}}}{s_P}\right)=\left(1-\frac{|A|}{|\Omega|}\right)\lambda_1(\emptyset)+\frac{|A|}{|\Omega|}\lambda_1(\Omega).
\]

This limit tells, on a linear level, that the system with a refuge of high frequency behaves like it would if flowers and beets had been perfectly mixed together, and that this is the best configuration of flowers one can hope for.

The second remark follows naturally from this explicit limit: in the high frequency asymptotic regime, the exact shape of the connected components of $A$ does not matter and increasing the area of the refuge is beneficial.

Nevertheless, the quantity we actually want to maximize is not $\lambda_1$ but rather the harvest $\Phi=\lim_{t\to+\infty}\int_{\Omega} H(x)-I(x,t)\textup{d}x$. Even though Theorem  \ref{largetimebehaviour} hints toward a monotonic relationship between $\lambda_1$ and $\Phi$, it turns out this is false. The following argument makes it clear: the best possible eigenvalue corresponds to $A=\Omega$; however, in such a case, there is no beets to harvest anymore whence $\Phi=0$. Therefore the optimal refuge area is intermediate: $0<|A|<|\Omega|$. An even more subtle fact is the following: our numerical simulations show that $\Phi$ is hugely impacted by the initial conditions $P_0$ and $V_0$. For instance, when $P_0=r_P/s_P$ and $V_0$ is supported inside a connected component of the refuge, say $A_{\text{init}}$, it is actually better to have $A_{\text{init}}$ as large as possible, which means that $A_{\text{init}}$ should be the only connected component of $A$, or in other words $A$ should have a low frequency.

In our understanding of the sugar beet agro-ecosystem, the initial condition for the aphids cannot be predicted and is mostly random. A natural perspective is then the probabilistic study of the expected harvest. However this is way outside the scope of this paper, and we leave it for future work. In particular, a more precise knowledge of the law of this random initial condition is necessary. Preliminary numerical simulations assuming a uniform law tend to indicate that the expected harvest is indeed increasing with the frequency of the refuge.

\subsection{Numerical framework}

We simulate the Cauchy--Neumann system \eqref{eqspring} with a classical semi-explicit scheme with finite differences (details can be found in the supplementary materials).

The script is written in Python, using the numpy \url{https://numpy.org/}, scipy \url{https://scipy.org/} and matplotlib \url{https://pypi.org/project/matplotlib/} packages.
For longer computations, this work was performed using the computing facilities of the CC LBBE/PRABI.
Details on the computation of the parameters can be found in the supplementary materials.

The refuge has area $\mathcal{A}\in(0,|\Omega|)$ and we consider a sequence $(A_n)_{n\in\mathbb{N}}$ of uniformly distributed refuges: 
\[
A_n:=\left(\bigcup\limits_{m=0}^{n-1}\left[\dfrac{m}{n}L,\dfrac{m}{n}L+\dfrac{\sqrt{\mathcal{A}}}{n}\right]\right)^2.
\]

For our simulation, we take $N=4000$ time steps, $J=80$ spatial steps, $T=365/4$ days, $\Omega=[0,L]^2$ with $L=300$ meters, $\mathcal{A}=\dfrac{|\Omega|}{5^2}$, $r_P=(r_R-r_F)\mathbf{1}_{A_4}+r_F$ and the initial conditions $I_0=0$, $V_{i,0}=(1/100)(r_V/s_V)$ distributed in 3 patches of different sizes, $V_{s,0}=(9/100)(r_V/s_V)$ distributed in the same patches than $V_{i,0}$, $P_0=r_P/s_P$.

\subsection{A remark on the ideal free dispersal strategy}

The growth rate $r_P$ is now a piecewise-constant function. This could appear as a difficulty, as the ideal free dispersal strategy $\nabla\cdot(r_P\nabla(P/r_P))$ is \textit{a priori} defined in \cite{cantrellcosnerlou2010} for $r_P$ at least of class $\mathcal{C}^2$. For piecewise-constant $r_P$, an alternative ideal free dispersal strategy with interface conditions is suggested in \cite{macielcosnercantrelllutscher}. However, when the space is discretized, even regular functions become piecewise-constant. In fact, it can be verified that an ideal free dispersal strategy of the form $\nabla\cdot(r_P\nabla(P/r_P))$ corresponds, after spatial discretization, to the interface conditions in \cite{macielcosnercantrelllutscher}.
Note also that the potential overflow issues can be overcome with a change of variable $\tilde{P}=P/r_P$. 

\subsection{Snapshots of the evolution}
Using the package animation \url{https://pypi.org/project/animation/}, we produce a video of the simulation (figures \ref{screensofsimu}, \ref{screensofsimu2}).

\begin{figure}
    \centering
    \includegraphics[scale=0.3]{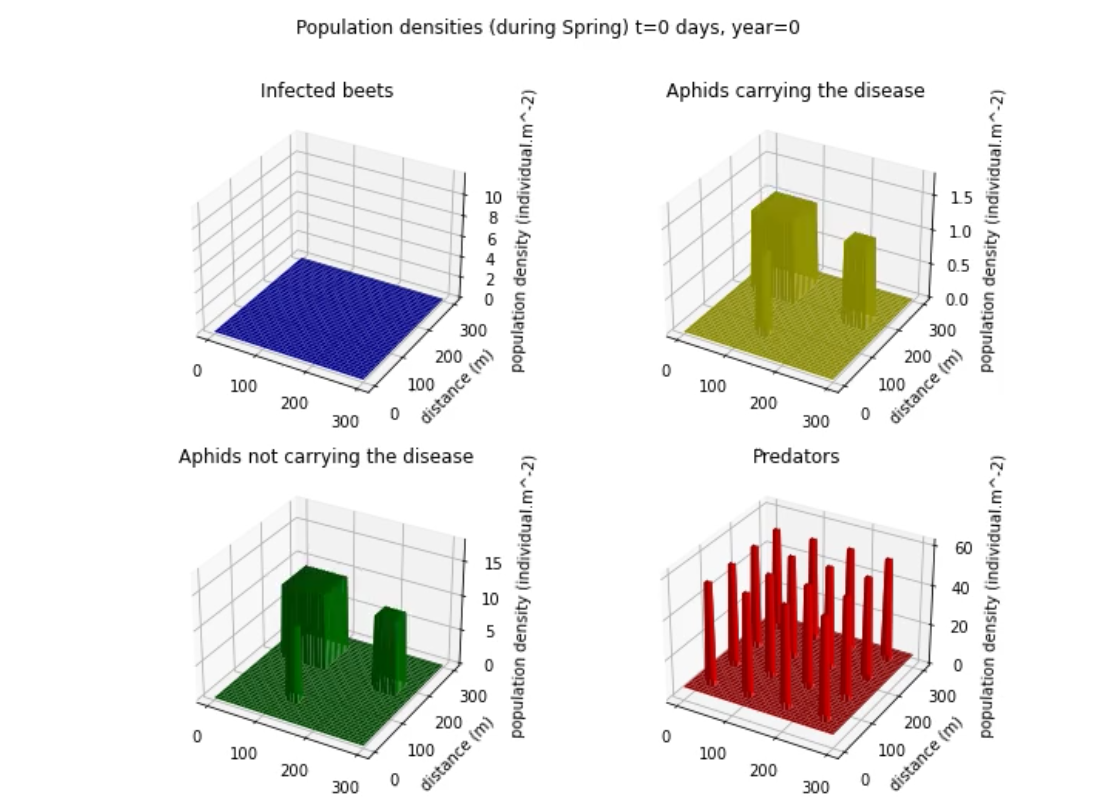}
    \caption{Video of the numerical simulation at the initial condition}
    \label{screensofsimu}
    \includegraphics[scale=0.3]{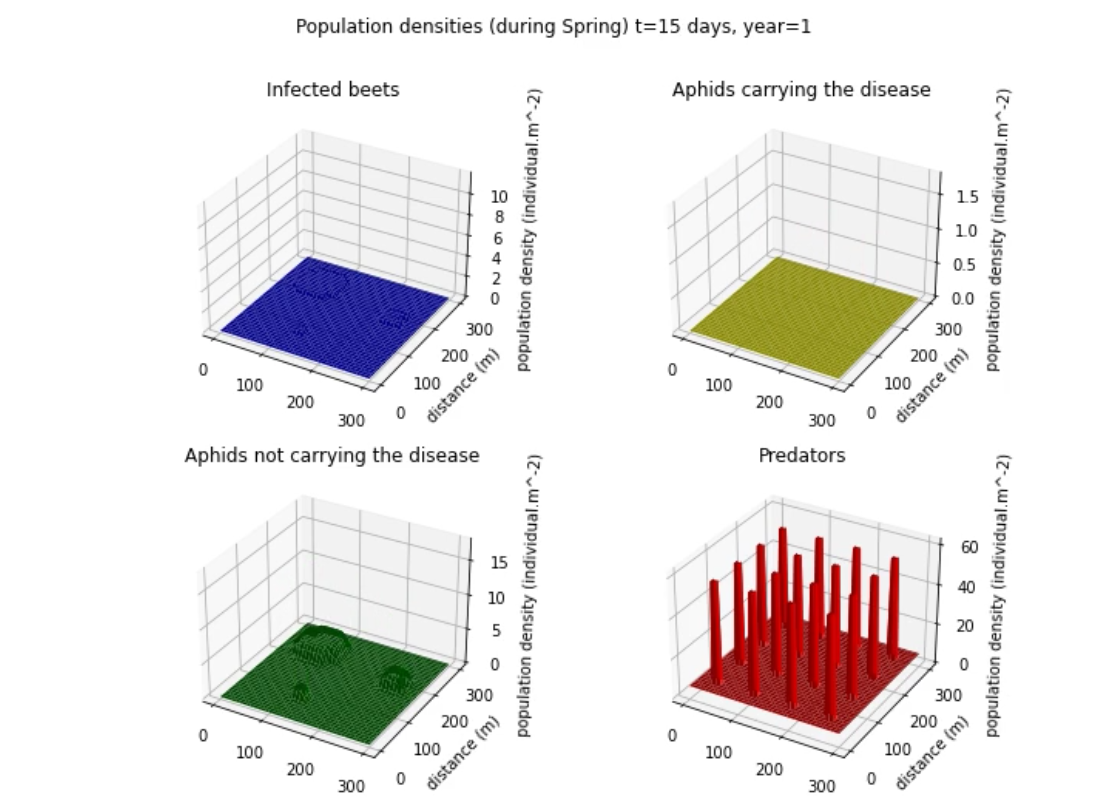}
    \caption{Video of the numerical simulation at 15 days}
    \label{screensofsimu2}
\end{figure}

At the end of the simulation, the total number of healthy beets is $1013554$ out of $1016470$. In the video, we can see that a very small portion of the beets are infected by aphids in their starting patches, while the aphids are quickly eradicated by the predators.

\subsection{The optimal control problem}

In the numerical simulations, we will refer to certain positionning of the populations as defined in the following.

\begin{definition}
 \begin{itemize}
  \item ``Frequency of the refuges'': the integer $n$, where $R=\mathbf{1}_{A_n}$, i.e $n^2$ squares uniformly distributed in the field, whose areas sum up to $\mathcal{A}$.
  \item ``Random patches'': the 3 patches one can see in the snapshot of the video at the initial condition.
  \item ``Centered patch'' = the frequency 1 square.
 \end{itemize}
\end{definition}

As we explained before, there is no monotonicity of the harvest with respect to $\lambda_1(\mathcal{L}_{V_s})$. To illustrate, we use the fact that increasing the frequency of the refuges increases $\lambda_1(\mathcal{L}_{V_s})$, as shown in \cite{chenlou}. Figure \ref{harvestfuncfreqpatchs} displays the harvest $\int_{\Omega}H-I_{\infty}$ as a function of the frequency of the refuges, with the same set of parameters and initial conditions as before. Here, the frequencies $8$ and $16$ are clearly worse than the lower frequencies. The phenomenon can be even clearer: to obtain Figure \ref{harvestfuncfreqcenter}, we use a single centered patch as the initial condition for $V$, at the same location as the refuges at frequency $1$. Obviously, in this case, the best strategy is to use the refuges of frequency $1$, in order to place the predators exactly where the aphids are at the beginning of the simulation. However, when $V_0$ is homogeneous in space, as seen in Figure \ref{harvestfuncfrequniform}, the harvest seems to be increasing with $\lambda_1(\mathcal{L}_{V_s})$, which could be an interesting perspective of this paper.

\begin{figure}
    \centering
    \includegraphics[scale=0.6]{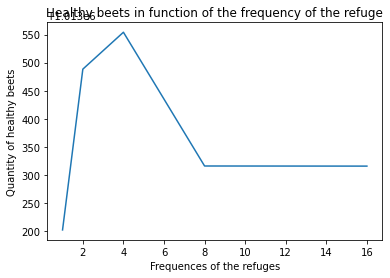}
    \caption{Healthy beets in function of the frequency of the refuges, with the initial condition of the aphids $V_0$ in random patches} 
    \label{harvestfuncfreqpatchs}
    \includegraphics[scale=0.6]{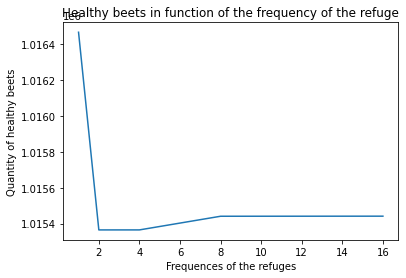}
    \caption{Healthy beets in function of the frequency of the refuges, with the initial condition of the aphids $V_0$ in a centered patch}
    \label{harvestfuncfreqcenter}
    \includegraphics[scale=0.6]{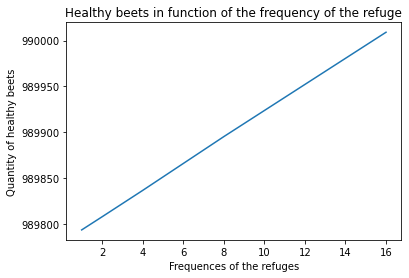}
    \caption{Healthy beets in function of the frequency of the refuges, with the initial condition of the aphids $V_0$ homogeneous in space}
    \label{harvestfuncfrequniform}
\end{figure}

To illustrate the important effect of the initial conditions, we numerically study the optimal quantity of refuges. Say $R$ is a real parameter in $[0,1]$. $r_V$, $r_P$ and $H$ are now homogeneous in space, and the problem is to find $R_{\text{opt}}\in[0,1)$ to maximize the quantity $\int_{\Omega}H-I_{\infty}$. In Figure \ref{harvestfuncquantlowVi} and \ref{harvestfuncquanthighVi}, we took the same set of parameters as in the supplementary materials, with the same initial conditions as in the numerical framework except that we took a uniform initial condition on $V$. In Figure \ref{harvestfuncquantlowVi}, $V_{i,0}$ is quite small, and the simulation hints that it is best not to put any resources in the field: $R_{\text{opt}}=0$. We can interpret this by saying that the predators handle the small quantity of aphids well enough so that they do not need more resources to eradicate the aphid population in a very short amount of time. On the contrary, in Figure \ref{harvestfuncquanthighVi}, $V_{i,0}$ is large, and the predators actually need resources for some beets to be saved: $R_{\text{opt}}$ is at a great distance from $0$. Depending on the initial conditions, it seems that $R_{\text{opt}}$ can be located at any point of $[0,1)$.

\begin{figure}
    \centering
    \includegraphics[scale=0.8]{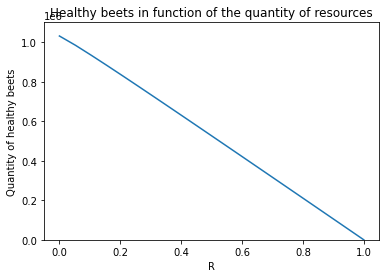}
    \caption{Healthy beets in function of the quantity of resources, with the initial condition of the infected aphids $V_{i,0}=\dfrac{r_V}{100s_V}$ and the refuges $R$ both homogeneous in space}
    \label{harvestfuncquantlowVi}
    \includegraphics[scale=0.8]{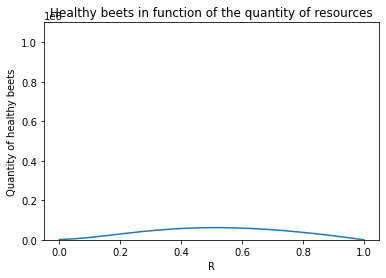}
    \caption{Healthy beets in function of the quantity of resources, with the initial condition of the infected aphids $V_{i,0}=\dfrac{2r_V}{s_V}$ and the refuges $R$ both homogeneous in space}
    \label{harvestfuncquanthighVi}
\end{figure}

\bibliographystyle{plain}
\bibliography{biblioBaptiste}

\end{document}